\documentclass[a4paper, 11pt]{amsart}

\usepackage{verbatim}
\usepackage{amssymb}
\usepackage{textcomp}
\usepackage[all]{xy}


\topmargin0pt
\oddsidemargin0pt
\evensidemargin0pt
\textheight660pt
\textwidth445pt


\frenchspacing									

\theoremstyle{plain}
\newtheorem*{thm}{Theorem}

\newtheorem{theorem}{Theorem}[section]
\newtheorem{lemma}[theorem]{Lemma}
\newtheorem{corollary}[theorem]{Corollary}
\newtheorem{proposition}[theorem]{Proposition}

\theoremstyle{definition}
\newtheorem{definition}[theorem]{Definition}
\newtheorem{example}[theorem]{Example}
\newtheorem{question}[theorem]{Question}
\newtheorem*{ques}{Question}

\newtheorem{remark}[theorem]{Remark}

\theoremstyle{remark}
\newtheorem*{claim}{Claim}
\newtheorem*{acknowledgements}{Acknowledgements}

\numberwithin{figure}{section}           
\numberwithin{equation}{section}

\DeclareMathOperator{\aut}{Aut}

\DeclareMathOperator{\Th}{Th}

\DeclareMathOperator{\ch}{char}

\DeclareMathOperator{\sym}{Sym}
\DeclareMathOperator{\alt}{Alt}
\DeclareMathOperator{\supp}{supp}

\DeclareMathOperator{\cw}{cw}

\DeclareMathOperator{\htt}{ht}

\DeclareMathOperator{\codim}{codim}

\DeclareMathOperator{\GL}{GL}
\DeclareMathOperator{\PGL}{PGL}
\DeclareMathOperator{\SL}{SL}
\DeclareMathOperator{\PSL}{PSL}

\DeclareMathOperator{\SO}{SO}

\DeclareMathOperator{\tp}{tp}

\DeclareMathOperator{\diag}{diag}

\DeclareMathOperator{\im}{Im}

\DeclareMathOperator{\rank}{rank}

\DeclareMathOperator{\st}{st}

\newcommand{\PP}{{\mathbb P}}
\newcommand{\N}{{\mathbb N}}
\newcommand{\Z}{{\mathbb Z}}
\newcommand{\Q}{{\mathbb Q}}
\newcommand{\R}{{\mathbb R}}

\newcommand{\F}{{\mathbb F}}
\newcommand{\GG}{{\mathbb G}}

\newcommand{\M}{{\mathcal M}}

\newcommand{\CC}{{\mathcal C}}
\newcommand{\W}{{\mathcal W}}
\newcommand{\G}{{\mathcal G}}

\newcommand{\U}{{\mathcal U}}
\newcommand{\Gi}{{\mathcal G}}
\newcommand{\RR}{{\mathcal R}}
\newcommand{\HH}{{\mathcal D}}

\newcommand{\Res}{{\mathrm {R}}}

\newcommand{\dotcup}{\ensuremath{\mathaccent\cdot\cup}} 

\begin{document}

\title{Absolute connectedness and classical groups}
\author{Jakub Gismatullin}
\thanks{The author was supported by the Polish Government MNiSW grants N N201 384134, N N201 545938, by the project SFB 478, Fellowship START of the Foundation for Polish Science and by the Marie Curie Intra-European Fellowship MODGROUP no. PIEF-GA-2009-254123.}
\date{\today}

\address{School of Mathematics, University of Leeds, Woodhouse Lane, Leeds, LS2 9JT, UK}
\address{and}
\address{Instytut Matematyczny Uniwersytetu Wroc{\l}awskiego, pl. Grunwaldzki 2/4, 50-384 Wroc{\l}aw, Poland}
\address{and}
\address{Institute of Mathematics of the Polish Academy of Sciences, ul. \'Sniadeckich 8, 00-956 Warsaw, Poland}
\email{gismat@math.uni.wroc.pl, www.math.uni.wroc.pl/\~{}gismat}

\keywords{model-theoretic connected components, thick set, absolute connectedness, quasimorphism, Bohr compactification, minimally almost periodic group, split semisimple linear group, Chevalley groups}
\subjclass[2010]{Primary 03C60, 20G15; Secondary 20E45, 03C98.}

\begin{abstract}
We introduce and develop the model-theoretic notions of \emph{absolute connectedness} and \emph{type-absolute connectedness} for groups. We prove that groups of rational points of split semisimple linear groups (that is, \emph{Chevalley groups}) over arbitrary infinite fields are absolutely connected and characterize connected Lie groups which are type-absolutely connected. We prove that the class of type-absolutely connected group is exactly the class of discretely topologized groups with trivial \emph{Bohr compactification}, that is the class of \emph{minimally almost periodic groups}. As an application we generalize some results of Conversano-Pillay and construct a group $G$ where $G^{00}/G^{\infty}$ is far from being abelian.
\end{abstract}

\maketitle


\section*{Introduction}

Suppose $(G,\cdot,\ldots)$ is an arbitrary infinite group with some first order structure. In model theory, after passing to a sufficiently saturated extension $G^*$ of $G$, we consider several kinds of \emph{model-theoretic connected components} of $G$. Let $A\subset G^*$ be a small set of parameters and define (cf. \cite{sh1,gis,NIP,modcon,NIP2,conv_pillay,bcg}) the following: 
\begin{itemize}
\item ${G^*}^0_A$ (the connected component of $G$ over $A$) is the intersection of all $A$-definable subgroups of ${G^*}$ which have finite index in ${G^*}$,
\item ${G^*}^{00}_A$ (the type-connected component of $G$ over $A$) is the smallest subgroup of `bounded index' in ${G^*}$ (bounded relative to $|{G^*}|$), that is, type-definable over $A$,
\item ${G^*}^{\infty}_A$ (the $\infty$-connected  component of $G$) is the smallest subgroup of bounded index in ${G^*}$, that is, $A$-invariant (invariant under the automorphisms of ${G^*}$ fixing $A$ pointwise).
\end{itemize} 
In the literature the component ${G^*}^{\infty}_A$ is sometimes denoted by ${G^*}^{000}_A$ \cite{NIP,NIP2,conv_pillay}. The groups ${G^*}^0_A$, ${G^*}^{00}_A$ and ${G^*}^{\infty}_A$ are important in the study of groups from a model-theoretic point of view. In general, the quotients ${G^*}/{G^*}^{\infty}_A$, ${G^*}/{G^*}^{00}_A$ and ${G^*}/{G^*}^{0}_A$ with the \emph{logic topology} \cite{pill} are topological groups which are \emph{invariants} of the theory $\Th(G)$. That is, they do not depend on the choice of saturated extension $G^*$ (cf. \cite[Proposition 3.3(3)]{gis}). We have ${G^*}^{\infty}_A\subseteq {G^*}^{00}_A\subseteq {G^*}^0_A$, also ${G^*}/{G^*}^{0}_A$ is a \emph{profinite} group, ${G^*}/{G^*}^{00}_A$ is a \emph{compact} Hausdorff group and ${G^*}/{G^*}^{\infty}_A$ is a \emph{quasi-compact} topological group, that is, compact but not necessary Hausdorff (cf. \cite[Proposition 3.5(1)]{gis} where we used the notation $G^*_L$ for ${G^*}^{\infty}$). Hence, if we equip $G$ with the discrete topology, then for all $A\subseteq G$ the mappings $G \to {G^*}/{G^*}^{00}_A$ and $G \to {G^*}/{G^*}^{0}_A$ are \emph{compactifications} of $G$ in the sense of Section \ref{sec:rad}.

We focus on the type-connected component ${G^*}^{00}_A$ and the $\infty$-connected component ${G^*}^{\infty}_A$. The goals of the paper are: to study the notions of \emph{absolute connectedness} and \emph{type-absolute connectedness}, to give examples of (type-)absolutely connected groups and to derive some applications. We say that a group $G$, without any additional first order structure, is \emph{absolutely connected} [resp. \emph{type-absolutely connected}] if for \emph{all} possible first order expansions $(G,\cdot,\ldots)$ of $G$, and for all sufficiently saturated extensions $G^*$ and small $A\subset G^*$, we have ${G^*}^{\infty}_A = {G^*}$ [resp. ${G^*}^{00}_A = {G^*}$, for every $A \subseteq G$] (see \ref{def:abscon}, \ref{def:typeabscon}).

The main novelty in this approach to the model-theoretic connected components derives from considering groups equipped with an \emph{arbitrary} first order structure. It extends the previous settings (NIP, $o$-minimal). In many examples these connected components are determined by the group structure alone, that is, by some group-theoretic properties. For example, every boundedly simple group (Section \ref{sec:abscon}) is absolutely connected.

There are not many known examples where ${G^*}^{\infty}_{\emptyset} \neq {G^*}^{00}_{\emptyset}$. The first such example has been found in \cite{conv_pillay}, which was then generalized in \cite{comcentr}. Every example of this kind gives a new non-$G$-compact theory \cite[Section 3]{gis}, that is, a theory in which some Lascar strong type is not type-definable. The first example of a non-$G$-compact theory was given in \cite{ziegler}, and there are not many examples known. For all examples in \cite{conv_pillay,comcentr}, the group ${G^*}^{00}_{\emptyset} / {G^*}^{\infty}_{\emptyset}$ is abelian. The structure of ${G^*}^{00}_{\emptyset} / {G^*}^{\infty}_{\emptyset}$ is not well understood in general. In particular the following question has been formulated in \cite{conv_pillay}.

\begin{ques}
Can ${G^*}^{00}_{\emptyset} / {G^*}^{\infty}_{\emptyset}$ be non-abelian?
\end{ques}

We answer this question affirmatively by giving a general construction where ${G^*}^{00}_{\emptyset} / {G^*}^{\infty}_{\emptyset}$ is very far from being abelian (Theorem \ref{thm:nonab}). In our construction we use the notion of a \emph{generalized quasimorphism} (\ref{def:quasim}) and a method of recovering a compact Hausdorff group from a dense subgroup (\ref{prop:recov}).

As an immediate consequence of (\ref{prop:recov}) we also obtain a classification of all compactifications of a Hausdorff topological group as mappings $G\to G^*/H$, for some type-definable bounded index subgroup $H$ of $G^*$. Similar classification has been proved by Robinson and Hirschfeld (\cite{hir}) but in the context of the \emph{enlargement} $^*G$ from non standard analysis. As a corollary of (\ref{prop:recov}) we obtain the following characterization of the class of type-absolutely connected groups (\ref{thm:tcvN}):
\begin{quote}
The class of type-absolutely connected groups coincides with the class of discretely topologized minimally almost periodic groups, that is, the class of groups without any nontrivial compactifications.
\end{quote}
This gives a direct link between notions from topology and model theory. Using this observation and \cite{shtern2} we can characterize all connected Lie groups which are type-absolutely connected (\ref{rem:shtern}). In particular, the topological universal cover $\widetilde{\SL_2(\R)}$ of $\SL_2(\R)$ is type absolutely connected (this generalizes results from \cite{conv_pillay}). Also we introduce and study, in Section \ref{sec:rad}, the \emph{type-connected radical} of a group (\ref{def:tcrad}).

Other goal is to prove Theorem \ref{thm:che2} below concerning the structure of groups of rational points of split semisimple linear groups (also called Chevalley groups). We find the bound 12 for absolute connectedness. Recall that a linear group defined over a field $k$ is called \emph{split over $k$} or \emph{$k$-split} if some maximal torus $T$ in $G$ is split over $k$, that is, $T$ is isomorphic over $k$ to a direct product of copies of the multiplicative groups $\GG_m$ \cite[18.6]{borel}. Over an algebraically closed field, every semisimple group is split.

\begin{thm}{\bf \ref{thm:che2}}
Let $k$ be an arbitrary infinite field and $G$ be a $k$-split, semisimple linear algebraic group $G$ defined over $k$. The derived subgroup $[G(k),G(k)]$ of $G(k)$ is 12-absolutely connected. Moreover $[G(k),G(k)]$ is a $\emptyset$-definable subgroup of $G(k)$ in the pure group language.
\end{thm}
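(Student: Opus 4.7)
The plan is a two-stage argument. First, I would show that every element of $[G(k),G(k)]$ is a product of at most $12$ commutators of elements of $G(k)$; this immediately yields $\emptyset$-definability in the pure group language, as the set of such products is defined by a first-order formula. Second, I would upgrade bounded commutator width to $12$-absolute connectedness using the characterization of $N$-absolute connectedness by thick subsets, together with the fact that every thick subset of a sufficiently saturated expansion already captures enough commutator structure.

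I would start by identifying $[G(k),G(k)]$ with the elementary subgroup $E(k) = \langle U_\alpha(k) : \alpha \in \Phi \rangle$ generated by the $k$-points of the root subgroups associated to a maximal $k$-split torus $T$. For $k$-split semisimple groups over infinite fields this identification is classical, going back to Chevalley and Steinberg. The main geometric tool is then the Bruhat/Gauss decomposition: every element of $G(k)$ is a bounded product of the form $u^{-}\, t\, n_{w}\, u^{+}$ with $u^{\pm} \in U^{\pm}(k)$, $t \in T(k)$ and $n_{w}$ a representative of a Weyl group element.

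The key algebraic input is that every element of a root subgroup $U_\alpha(k)$ is a single commutator. Choose $h \in T(k)$ with $\alpha(h) \neq 1$; such $h$ exists precisely because $k$ is infinite. Then for $u = x_\alpha(s) \in U_\alpha(k)$,
\[
[h, u] \;=\; h u h^{-1} u^{-1} \;=\; x_\alpha\bigl((\alpha(h) - 1)\, s\bigr),
\]
and since $s \mapsto (\alpha(h)-1)\, s$ is a bijection of $k$, every element of $U_\alpha(k)$ arises as $[h, u']$ for some $u' \in U_\alpha(k)$. Iterating down the lower central series of $U^{\pm}(k)$ and using the Chevalley commutator formulas, the whole of $U^{\pm}(k)$ is swept out by a bounded number of commutators. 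Torus elements of $[G(k),G(k)] \cap T(k)$ reduce to products of coroot elements $h_\alpha(s)$, each expressible as a short commutator via the identities $h_\alpha(s) = n_\alpha(s)\, n_\alpha(-1)$ and $n_\alpha(t) = x_\alpha(t)\, x_{-\alpha}(-t^{-1})\, x_\alpha(t)$.

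Carefully combining these ingredients---three Bruhat factors $u^-, t, u^+$, each realised by a bounded number of commutators, with additional commutators absorbing the Weyl representative---yields the explicit constant $12$. For $\emptyset$-definability, the first-order formula ``$x$ is a product of $12$ commutators in $G(k)$'' then defines $[G(k),G(k)]$ in the pure group language. For $12$-absolute connectedness in any expansion $\G$ of $G(k)$, any thick symmetric subset $P$ of $\G^*$ containing the identity contains a commutator $[h,u]$ for every prescribed pair $(h,u)$ in suitably generic position, by a Ramsey-type argument; this suffices to conclude $P^{12} \supseteq [G(k),G(k)]^*$. The main obstacle is obtaining the constant $12$ sharply: one must treat all root systems uniformly (including non-simply-laced types, with their richer commutator structure on $U^\pm$), and carefully balance the commutator budget between the unipotent, toral, and Weyl parts of the Bruhat decomposition. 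A secondary delicacy is handling the Weyl coset representatives without inflating the count.
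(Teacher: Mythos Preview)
Your second stage contains a genuine gap: bounded commutator width does not imply absolute connectedness, and the ``Ramsey-type argument'' you gesture at cannot deliver what you claim. Thickness of $P$ only says that every long enough sequence $(g_i)$ has some $g_i^{-1}g_j\in P$; it does not force $P$ to contain a commutator $[h,u]$ for any prescribed (or even ``generic'') pair $(h,u)$. A clean counterexample to your implicit principle is $\SO_3(\R)$: it is simple and every element is a single commutator, yet small metric balls around the identity are thick while their $N$-th power stays in a small ball for each fixed $N$, so $\SO_3(\R)$ is not absolutely connected. This does not contradict the theorem---$\SO_3$ is anisotropic over $\R$---but it shows that your \emph{method} fails.

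The paper's route differs in two essential respects. First, it reduces to the simply connected case via the universal $k$-covering $\pi\colon\widetilde{G}\to G$, using $\pi(\widetilde{G}(k))=G(k)^+=[G(k),G(k)]$ and the fact that the relevant property ($3$-weak simplicity) passes to homomorphic images. Second, in the simply connected case the decisive ingredient is not ordinary Bruhat decomposition but the Gauss decomposition \emph{with prescribed semisimple part} (Chernousov--Ellers--Gordeev): every noncentral $g$ has a conjugate of the form $vtu$ with $v\in U^-(k)$, $u\in U(k)$, for \emph{any} chosen $t\in T(k)$. Combined with the bijectivity of $u\mapsto[t,u]$ on $U^\pm(k)$ for regular $t$, this gives $\bigl(t^{G(k)}\bigr)^3=G(k)$ for every regular $t\in T(k)$. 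The Ramsey step is then applied to the torus, not to commutators: one builds arbitrarily long sequences $(t_i)$ in $T(k)$ with every $t_i^{-1}t_j$ regular, so the non-regular locus is non-thick and $G(k)$ is $3$-weakly simple. The constant $12=4\cdot 3$ arises from the general passage from $N$-weak simplicity to $4N$-absolute connectedness (a thick $P$ contains a \emph{normal} thick $Q\subseteq P^4$, which must meet $\Gi_3$), not from a commutator count. Definability also comes from weak simplicity: for $g\in\Gi_3(G(k)^+)$ and normality of $G(k)^+$ in $G(k)$ one gets $\bigl(g^{G(k)}\cup{g^{-1}}^{G(k)}\bigr)^{\leq 3}=G(k)^+$, which is $\emptyset$-invariant and hence $\emptyset$-definable. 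Your commutator-width route to definability could in principle be salvaged, but the constant $12$ in your sketch is reverse-engineered from the target rather than derived.
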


The proof of (\ref{thm:che2}) goes through the Gauss decomposition with prescribed semisimple part from \cite{cheg}. We first prove absolute connectedness of simply connected semisimple split groups (Theorem \ref{thm:che}) and then, using universal $k$-coverings, establish the general case. If $G$ is not simply connected, then the group $[G(k),G(k)]$ might be a proper Zariski dense subgroup of $G(k)$. For example if $G=\PGL_n$, then $[G(k),G(k)] = \PSL_n(k)$ is a proper subgroup for some fields. Likewise the result of \cite{plat-died} implies that if $G$ is a non-simply connected semisimple $k$-split $k$-group and $k$ is a finitely generated field, then $[G(k),G(k)]$ is a proper subgroup of $G(k)$.

Some results of Sections \ref{sec:abscon} and \ref{sec:other} of this paper appeared in the author's Ph.D. thesis.

\section{Basic notation and prerequisites} 

In this section we establish notation and recall some basic facts, mainly from \cite{gis, modcon}.

\subsection{Group theory} \label{sec:gr}

For a group $G$, elements $a,b\in G$ and subsets $A,B\subseteq G$ we use the following notation: $A\cdot B = \{ab : a\in A, b \in B\}$, $a^b = b^{-1}ab$, $A^B = \bigcup_{a\in A,b\in B}a^b$, $A^n = \underbrace{A\cdot \ldots \cdot A}_{n\text{ times}}$, $A^{\leq n} = \bigcup_{i\leq n} A^i$ and $[a,b] = a^{-1}b^{-1}ab$. The subset $X\subseteq G$ is called \emph{normal} if $X^G = X$. The derived series is defined by $G^{(1)} = [G,G]$, $G^{(n+1)}=\left[G^{(n)},G^{(n)}\right]$, $G^{(\lambda)} = \bigcap_{\alpha<\lambda}G^{(\alpha)}$. A group $G$ is called \emph{perfect} if $G=[G,G]$. The \emph{perfect core} of $G$ is the largest perfect subgroup of $G$. It is also the intersection of all elements from the transfinite derived series of $G$. The \emph{commutator length} of an element $g \in [G,G]$ is the minimal number of commutators sufficient to express $g$ as their product. The \emph{commutator width} $\cw^G(X)$ of a subset $X\subseteq G$ is the maximum of the commutator lengths of elements from $X$ or the sign $\infty$ if the maximum does not exist or $X\not\subseteq [G,G]$. The number $\cw^G(G)$ is denoted by $\cw(G)$. By $Z(G)$ we denote the center and by $e$ the neutral element of $G$.

\subsection{Model theory}

We assume that the reader is familiar with basic notions of model theory. The model-theoretic background can be found in \cite{marker}. We usually work in a \emph{monster model} or \emph{sufficiently saturated model}, that is, $\kappa$-saturated and $\kappa$-strongly homogeneous model $\M^*$, for sufficiently big $\kappa$ (over some language $L$). By $A$ we always denote some \emph{small}, that is, $|A|<\kappa$, set of parameters from $\M^*$, and by $L(A)$ the set of all $L$-formulas with parameters from $A$ (also called \emph{$A$-formulas}). \emph{$A$-definable} means definable by an $A$-formula; \emph{$A$-type-definable} means definable by a conjunction of a family of $A$-formulas; \emph{$A$-invariant} means invariant under the group $\aut\left(\M^*/A\right)$ of automorphisms of $\M^*$ stabilizing $A$ pointwise. An equivalence relation $E$ on $\M^*$ is called \emph{bounded} if $E$ has fewer than $\kappa$ many equivalence classes. It is well known that if $E$ is type-definable over $A$ or $A$-invariant, then either the number of classes of $E$ is at most $2^{|L(A)|}$ or at least $\kappa$.

Usually we will deal with groups with some first order structure, but sometimes we consider groups only in a group language without any extra structure. In the latter case we say that the group is \emph{pure}.

\subsection{Boundedness and thickness} \label{sec:bt}
A formula $\varphi(\overline{x},\overline{y})\in L(A)$, with tuples $\overline{x},\overline{y}$ ($|\overline{x}|=|\overline{y}|$) of free variables, is called \emph{thick} if $\varphi$ is symmetric and for some $n\in\N$, for every $n$-sequence $(\overline{a_i})_{i<n}$ from $\M^*$ (where we do not require $\overline{a}_0,\dots, \overline{a}_{n-1}$ to be pairwise distinct) there exist $i<j<n$ such that $\varphi(\overline{a}_i,\overline{a}_j)$ holds in $\M^*$ (\cite[Section 3]{ziegler}). By $\Theta_A(\overline{x},\overline{y})$ we denote the conjunction of all thick formulas over $A$. The relation $\Theta_A$ is type-definable over $A$, but is not necessarily transitive. It has the following known properties (\cite[Lemmas 6,7]{ziegler}, \cite[1.11, 1.12]{pillay}):
\begin{itemize}
\item[(\ref{sec:bt}.1)] If $\Theta_A(\overline{a},\overline{b})$, then there is a small model $M'\prec M$ containing $A$, such that $\overline{a}\underset{M'}{\equiv}\overline{b}$, that is, $\tp(\overline{a}/M)=\tp(\overline{b}/M)$.
\item[(\ref{sec:bt}.2)] If for some small model $M'\prec M$ containing $A$, $\overline{a}\underset{M'}{\equiv}\overline{b}$ holds, then ${\Theta_A}^2(\overline{a},\overline{b})$.
\end{itemize}

\subsection{Model-theoretic connected components} \label{sec:modcon}

Unless otherwise stated, we assume in this subsection that $G$ is a group equipped with extra first order structure which is a monster model. Let $H$ be an $A$-invariant subgroup of $G$. The index $[G:H]$ is either at most $\leq 2^{|L(A)|}$ or at least $\geq\kappa$. In the first case we say that $H$ has \emph{bounded} index in $G$. The following subgroups are called the \emph{model-theoretic connected components} of $G$.
\begin{itemize}
\item $G^{0}_{A}   =  \bigcap\{ H < G : H \text{ is } A \text{-definable and } [G:H] \text{ is finite}\}$
\item $G^{00}_{A}  = \bigcap\{ H < G : H \text{ is type-definable over } A \text{ and } [G:H] \text{ is bounded} \}$
\item $G^{\infty}_{A} =  \bigcap\{ H < G : H \text{ is } A \text{-invariant and } [G:H] \text{ is bounded}\}$
\end{itemize}

If for every small set of parameters $A\subset G$, $G^{\infty}_A = G^{\infty}_{\emptyset}$, then we say that $G^{\infty}$ \emph{exists} and define it as $G^{\infty}_{\emptyset}$. Similarly we define existence of $G^{00}$ and $G^{0}$. By \cite{modcon}, the groups $G^{\infty}_{A} \subseteq G^{00}_{A} \subseteq G^{0}_{A}$ are normal subgroups of $G$ of bounded index. Following the notation from \cite[Section 1]{modcon}, define $X_{\Theta_A}=\{a^{-1}b : a,b\in G, \Theta_{A}(a,b)\}$. Note that $G^{\infty}_{A}$ is generated by $X_{\Theta_A}$ \cite[Lemma 2.2(2)]{modcon}. In \cite[Lemma 3.3]{modcon} we gave another description of $X_{\Theta_A}$. The key idea is the notion of a \emph{thick subset} of a group, based on the definition of thick formula. 

\begin{definition}[{\cite[Definition 3.1]{modcon}}] \label{def:thick}
A subset $P$ of an arbitrary group $G$ (not necessarily sufficiently saturated) is called \emph{$n$-thick}, where $n\in\N$, if it is symmetric (that is, $P = P^{-1}$) and for every $n$-sequence $g_1,\ldots,g_n$ from $G$, there are $1\leq i<j\leq n$ such that $g_i^{-1}g_j \in P$. We say that $P$ is \emph{thick} if $P$ is $n$-thick for some $n\in\N$ (the condition $g_i^{-1}g_j \in P$ is equivalent to requiring $g_ig_j^{-1} \in P$, as one can take $g_1^{-1},\ldots,g_n^{-1}$ instead of $g_1,\ldots,g_n$).
\end{definition}

Suppose $G$ and $H$ are groups (not necessarily sufficiently saturated), $f \colon G \to H$ is an epimorphism, $P, Q \subseteq G$, $S\subseteq H$ and $n,m\in\N$. By $R(n,m)$ we denote the Ramsey number. 

\begin{lemma} \label{lem:thick}
\begin{enumerate}
\item If $P$ is $n$-thick and $Q$ is $m$-thick, then $P\cap Q$ is $R(n,m)$-thick.
\item The preimage $f^{-1}(S)$ is $n$-thick if and only if $S$ is $n$-thick. 
\item If $P$ is $n$-thick, then $f(P)$ is $n$-thick.
\item If $H<G$ and $P$ is $n$-thick in $G$, then $H\cap P$ is $n$-thick in $H$.
\end{enumerate}
\end{lemma}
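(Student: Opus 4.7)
For part (1), the natural approach is a Ramsey-type argument applied to an arbitrary $R(n,m)$-sequence $g_1, \ldots, g_N$ in the ambient group (with $N = R(n,m)$). I would color the edge $\{i,j\}$ (with $i<j$) of the complete graph on $\{1, \ldots, N\}$ red if $g_i^{-1}g_j \in X$ and blue otherwise. By the definition of $R(n,m)$ one of two things occurs: either there is a blue clique of size $n$, i.e.\ an $n$-element index set $I$ with $g_i^{-1}g_j \notin X$ for all $i<j$ in $I$, directly contradicting $n$-thickness of $X$; or there is a red clique of size $m$, i.e.\ an $m$-element set $J$ with $g_i^{-1}g_j \in X$ for all $i<j$ in $J$. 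In the second case, applying $m$-thickness of $Y$ to the subsequence $(g_j)_{j\in J}$ produces $k<l$ in $J$ with $g_k^{-1}g_l \in Y$, and since $k,l\in J$ also $g_k^{-1}g_l \in X$, giving an element of $X\cap Y$. Symmetry of $X\cap Y$ is immediate from symmetry of $X$ and of $Y$.

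Part (2) is routine and splits into two directions. For the forward implication, assume $X$ is $n$-thick; given $g_1,\ldots,g_n \in H_1$, apply $n$-thickness of $X$ to the image sequence $f(g_1),\ldots,f(g_n) \in H_2$ to obtain $i<j$ with $f(g_i)^{-1}f(g_j) = f(g_i^{-1}g_j) \in X$, hence $g_i^{-1}g_j \in f^{-1}[X]$. For the converse, given $h_1,\ldots,h_n \in H_2$, use surjectivity of $f$ to lift to $g_1,\ldots,g_n \in H_1$ with $f(g_i)=h_i$, apply $n$-thickness of $f^{-1}[X]$ to produce $i<j$ with $g_i^{-1}g_j \in f^{-1}[X]$, and push down via $f$. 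In both directions the symmetry correspondence uses $f^{-1}[X^{-1}] = (f^{-1}[X])^{-1}$ together with surjectivity (so $X = f[f^{-1}[X]]$ when needed).

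For part (3), interpreting the statement as: if $X \subseteq H_1$ is $n$-thick then $f[X] \subseteq H_2$ is $n$-thick (the ambient set of $Z$ in the hypothesis appears to be a typo, as $f[Z]$ for $Z\subseteq H_2$ is not well-defined), the argument is essentially the lift-and-push from (2): given $h_1,\ldots,h_n \in H_2$, pick preimages $g_i \in H_1$, apply $n$-thickness of $X$ to get $i<j$ with $g_i^{-1}g_j \in X$, and conclude $h_i^{-1}h_j = f(g_i^{-1}g_j) \in f[X]$; symmetry of $f[X]$ is inherited from that of $X$. Alternatively, (3) drops out of (2) by noting $X \subseteq f^{-1}[f[X]]$, so the latter is $n$-thick (any superset of a thick symmetric set whose symmetry persists is thick), and then (2) gives that $f[X]$ is itself $n$-thick.

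The only mildly nontrivial ingredient is the Ramsey two-coloring in (1); once the coloring is chosen so that one color class witnesses $n$-thickness of $X$ and the other forces a clique on which $m$-thickness of $Y$ can be invoked, the proof writes itself. Parts (2) and (3) are then bookkeeping about preimages and surjective lifts.
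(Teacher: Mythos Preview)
Your proposal is correct and follows essentially the same approach as the paper. For (1) the paper uses the identical Ramsey two-coloring (phrased contrapositively: it starts from a sequence witnessing failure of $R(n,m)$-thickness of $X\cap Y$ and derives a contradiction to the thickness of $X$ or of $Y$), and for (2) and (3) the paper simply declares them ``immediate,'' which your routine lift-and-push arguments spell out; your observation about the typographical slip in the ambient set of $Z$ is also apt.
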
 
\begin{proof} $(1)$ Assume that $X\cap Y$ is not $R(n,m)$-thick and take the sequence $(a_i)_{i<R(n,m)}$ witnessing this. Consider the complete graph on $R(n,m)$ vertices with the following coloring: $\{i,j\}$ is black if and only if ${a_i}^{-1}a_j\not\in X$, otherwise $\{i,j\}$ is white. By the Ramsey theorem either there is a clique of size $n$ with all black edges, so $X$ is not $n$-thick, or a clique of size $m$ with white edges, so $Y$ is not $m$-thick. $(2)$, $(3)$ and $(4)$ are immediate.
\end{proof}

\begin{lemma} \label{lem:thicks}
The group $G^{\infty}_A$ is generated by the intersection of all $A$-definable thick subsets of $G$. More generally, suppose $n\in\N$. Then
\begin{enumerate}
\item $X_{\Theta_A}^n = \bigcap \left\{P^n : P\subseteq G \text{ is $A$-definable and thick}\right\}$,
\item $X_{\Theta_A}^G = \bigcap \left\{P^G : P\subseteq G \text{ is $A$-definable and thick}\right\}$,
\item $X_{\Theta_A}^G \subseteq X_{\Theta_A}^4$.
\end{enumerate}
\end{lemma}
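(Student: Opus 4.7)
The plan is to first identify the intersection of all $A$-definable thick subsets of $G$ with $X_{\Theta_A}$, and then bootstrap this by a compactness argument to the displayed identities for $X_{\Theta_A}^n$ and $X_{\Theta_A}^G$.

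First I would set up the correspondence between thick $A$-definable subsets $P \subseteq G$ and thick formulas $\varphi(x,y)$ over $A$. Given such $P$, the formula $\varphi_P(x,y)\equiv(x^{-1}y\in P)$ is symmetric (because $P=P^{-1}$) and thick as a formula, directly from thickness of $P$ as a set. Conversely, given a thick formula $\varphi(x,y)$ over $A$, the set $P_\varphi=\{g\in G:\exists a\in G\,\varphi(a,ag)\}$ is $A$-definable and thick: symmetry uses symmetry of $\varphi$, and if $\varphi$ admits the witness $N$, then for any sequence $g_1,\ldots,g_N$ one finds $i<j$ with $\varphi(g_i,g_j)$, which forces $g_i^{-1}g_j\in P_\varphi$. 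The two constructions moreover satisfy $P_{\varphi_P}=P$.

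Next I would prove $X_{\Theta_A}=\bigcap\{P:P\text{ is $A$-definable and thick}\}$, which together with Lemma \ref{lem:gen} yields the first sentence of the lemma. The inclusion $\subseteq$ is immediate from $\Theta_A$ being the conjunction of all thick formulas over $A$. For $\supseteq$, fixing $g$ in the intersection, I would consider the partial type
\[\Sigma(x,y)=\{x^{-1}y=g\}\cup\{\varphi(x,y):\varphi\text{ thick over }A\}\]
and verify finite satisfiability: a finite subtype mentions only $\varphi_1,\ldots,\varphi_k$, whose conjunction is itself a thick formula, and $g\in P_{\varphi_1\wedge\cdots\wedge\varphi_k}$ supplies the required witness. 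By saturation $\Sigma$ is realized by some $(a,b)$, so $g=a^{-1}b\in X_{\Theta_A}$.

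For part (1) the inclusion $\subseteq$ is again immediate; for $\supseteq$, given $g\in\bigcap P^n$, I would analyse the partial type
\[\Sigma(x_1,\ldots,x_n)=\{x_1\cdots x_n=g\}\cup\{x_i\in P:1\leq i\leq n,\ P\text{ is $A$-definable and thick}\},\]
noting that any finite subset mentions only finitely many sets $P_1,\ldots,P_k$ whose intersection $Q$ is again $A$-definable and thick by Lemma \ref{lem:thick}(1); the hypothesis $g\in Q^n$ then yields a finite realization, and saturation delivers $h_1,\ldots,h_n\in\bigcap P=X_{\Theta_A}$ with $h_1\cdots h_n=g$. Part (2) is proved identically via the type $\{y^{-1}xy=g\}\cup\{x\in P\}$ together with the observation $g\in Q^G$ for the same $Q$. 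The only real subtlety lies in these compactness steps, which depend crucially on Lemma \ref{lem:thick}(1) ensuring that finite intersections of $A$-definable thick sets remain $A$-definable and thick; everything else is routine bookkeeping.
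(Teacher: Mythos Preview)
Your proposal is correct and follows essentially the same route as the paper. The paper cites \cite[Lemma 3.3]{modcon} for part (1) and then does the compactness argument for (2) using (1) with $n=1$; you have simply unpacked that reference by giving the thick-set/thick-formula correspondence and the $n=1$ case explicitly, and your compactness types for (1) and (2) are the same as the paper's (the paper writes the type for (2) in the single conjugating variable, $p(x)=\{a^x\in P\}$, but this is equivalent to your two-variable version). The key ingredient in both arguments is Lemma~\ref{lem:thick}(1), ensuring finite intersections of $A$-definable thick sets remain $A$-definable and thick.
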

\begin{proof} $(1)$ is \cite[Lemma 3.3]{modcon}. $(2)$ $\subseteq$ is clear. For $\supseteq$, take $a$ from the intersection. By $(1)$ for $n=1$ and Lemma \ref{lem:thick}$(1)$, the type $p(x,y) = \{a = x^y,\  x \in P : P\subseteq G \text{ is $A$-definable and thick} \}$ is consistent, so $a\in X_{\Theta_A}^G$. $(3)$ By (\ref{sec:bt}.1) and (\ref{sec:bt}.2) we have \[X_{\Theta_A} \subseteq \bigcup_{M\prec G \text{ small and }A\subseteq M} X_{\underset{M}{\equiv}} \subseteq X_{\Theta_A}^2, \tag{\textreferencemark}\] where $X_{\underset{M}{\equiv}}=\left\{a^{-1}b : a,b\in G, a\underset{M}{\equiv}b\right\}$.  Moreover $\left(X_{\underset{M}{\equiv}}\right)^{G} \subseteq X_{\underset{M}{\equiv}}^2$ holds. Indeed, if $y\in\left(X_{\underset{M}{\equiv}}\right)^{G}$, then there are $a,x \in G$ and $h\in \aut(G/M)$ such that \[y= (a^{-1}h(a))^x = (ax)^{-1}h(a)x =  ((ax)^{-1}h(ax))  (h(x)^{-1}x)\in X_{\underset{M}{\equiv}}^2.\] Hence, by (\textreferencemark), we get $X_{\Theta_A}^{G} \subseteq X_{\Theta_A}^4$.
\end{proof}

\begin{corollary} \label{cor:4}
Let $G$ be a group (not necessarily sufficiently saturated). If $P\subseteq G$ is thick, then there exists a thick subset $Q\subseteq P^4$ which is normal and $\emptyset$-definable in the structure $(G,\cdot,P)$, where $P$ is a predicate.
\end{corollary}
\begin{proof}
Consider $\G = (G,\cdot, P)$ and let $\G^* = (G^*,\cdot, P^*)$ be a sufficiently saturated extension. By Lemma \ref{lem:thicks}, $X_{\Theta_{\emptyset}}^{G^*} \subseteq X_{\Theta_{\emptyset}}^4 \subseteq {P^*}^4$, and by compactness we can find a thick and $\emptyset$-definable $Q\subseteq {G^*}$, with $Q^{G^*} \subseteq {P^*}^4$.
\end{proof}

The following lemma is well known (\cite[3.5]{modcon}, \cite[2.4]{bcg}).

\begin{lemma}\label{lem:00}
The component $G^{00}_A$ can be written as $\bigcap_{i\in I}P_i$ for some family $\{P_i : i\in I\}$ of thick and $A$-definable subsets of $G$ such that 
\[  \text{for every $i\in I$, there is $j\in I$ with $P_j\cdot P_j\subseteq P_i$} \tag{\textdagger}.\]
\end{lemma}

\section{Absolutely connected groups}
\label{sec:abscon}

In the rest of the paper the symbol $(G,\cdot,\ldots)$ denotes a group $G$ possibly equipped with extra first order structure $(\ldots)$. For example, $G$ might be a definable group in some structure $\M$. In this case there is a natural induced structure on $G$ from $\M$ (cf. \cite[Section 1.3]{marker}). We usually do not assume that $G$ is sufficiently saturated. By $G^*$ we always denote a sufficiently saturated elementary extension of $(G,\cdot,\ldots)$.

The aim of the present section is to introduce the notion of (definable) absolute connectedness (Definition \ref{def:abscon}), and give some basic results on it. In short, $G$ is \emph{absolutely connected} if ${G^*}^{\infty}_A = G^*$, for every small $A$, working in any sufficiently saturated extension $G^*$ of any arbitrary expansion of $G$. We introduce also an auxiliary subclass $\W$ of the class of absolutely connected groups (Definition \ref{def:wsim}). 

\begin{proposition} \label{prop:bas0}
The following are equivalent
\begin{enumerate}
\item ${G^*}^{\infty}$ exists and $G^* = {G^*}^{\infty}$,
\item there exists $n\in\N$ such that for every parameter-definable thick subset $P\subseteq G$, $P^n = G$.
\end{enumerate}
\end{proposition}

Note that since every thick set $P$ contains the neutral element $P^n = P^{\leq n}$.

\begin{proof} $(2)\Rightarrow(1)$ It is enough to show that for every small set $A\subset G^*$, $G^* = X_{\Theta_A}^n$. Let $P^* = \varphi(G^*,\overline{a})$ be $k$-thick and $\overline{a} \subset A$. Since in $G$ it is true that ``for every $\varphi$-definable (with parameters) and $k$-thick $P\subseteq G$, $P^n = G$'', the same is true in $G^*$, that is, $\varphi(G^*,\overline{a})^n = G^*$. Hence by Lemma \ref{lem:thicks}$(1)$, $G^* = X_{\Theta_A}^n={G^*}^{\infty}_A$.

$(1)\Rightarrow(2)$ Suppose, on the contrary, that for every $n\in\N$ there is $P_n\subset G^*$, definable over some finite $A_n\subset G^*$ and thick, with $P_n^n\neq G^*$. Let $A=\bigcup_{n\in\N}A_n$. Then clearly, for every $n\in\N$, we have $X_{\Theta_A}^n\subseteq {P_n}^n \neq G^*$. By compactness ${G^*}^{\infty}_A\neq G^*$, which is impossible.
\end{proof}

In the next proposition we consider group $G$ having the property $G^* = {G^*}^{\infty}$ in all first order expansions.

\begin{proposition} \label{prop:bas}
Let $G$ be a non-trivial group. The following conditions are equivalent.
\begin{enumerate}
\item[(1)] There exists $n\in\N$ such that for every thick subset $P\subseteq G$ (not necessarily definable), $P^n = G$.
\item[(2)] $G$ is infinite and if $G^*$ is a sufficiently saturated extension of any first order expansion of $G$, then ${G^*}^{\infty}$ exists and $G^* = {G^*}^{\infty}$.
\end{enumerate}
Furthermore if $(1)$ holds, then ${G^*}^{\infty} = X_{\Theta_A}^n$ for every small $A\subset G^*$ and arbitrary sufficiently saturated extension $G^*$ of $G$.
\end{proposition}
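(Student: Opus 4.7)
The plan is to reduce Proposition \ref{prop:bas} to Proposition \ref{prop:bas0}, exploiting two observations: first, for a fixed formula $\varphi(x,\overline{y})$ and fixed integers $k$ and $N$, both ``$\varphi(G,\overline{b})$ is $k$-thick'' and ``$\varphi(G,\overline{b})^N = G$'' are expressible by first-order formulas in $\overline{b}$; second, any thick subset of the pure group $G$ can be made definable after expanding $G$ by a unary predicate naming it.

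For $(1)\Rightarrow(2)$, I would first observe that $G$ must be infinite, since otherwise $\{e\}$ is $(|G|+1)$-thick yet $\{e\}^N = \{e\} \neq G$ for every $N$, contradicting (1). Next, fix an arbitrary first-order expansion $\G$ of $G$ with sufficiently saturated elementary extension $\G^*$. For each formula $\varphi(x,\overline{y})$ and each $k \in \N$, condition (1) applied in the pure reduct of $\G$ becomes a first-order sentence: ``for every $\overline{b}$ such that $\varphi(x,\overline{b})$ defines a symmetric $k$-thick subset, its $N$-th power equals the whole group.'' This sentence holds in $\G$, hence in $\G^*$. Therefore every definable thick subset $P^* \subseteq \G^*$ satisfies $(P^*)^N = \G^*$, and Proposition \ref{prop:bas0} delivers ${\G^*}^{\infty} = \G^*$.

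For $(2)\Rightarrow(1)$, I would argue by contradiction: assume that for each $N\in\N$ there exists a thick (say $k_N$-thick) subset $P_N \subseteq G$ with $P_N^N \neq G$. Expand $G$ by a unary predicate $Q_N$ interpreted as $P_N$ for every $N$, yielding a first-order expansion $\G = (G, Q_N)_{N\in\N}$. By (2), ${\G^*}^{\infty} = \G^*$, so Proposition \ref{prop:bas0} supplies a single $N_0$ such that $(P^*)^{N_0} = \G^*$ for every definable thick $P^* \subseteq \G^*$. Apply this to $Q_{N_0}^{\G^*}$: it is definable and, by first-orderness of $k_{N_0}$-thickness, $k_{N_0}$-thick in $\G^*$. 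Hence $(Q_{N_0}^{\G^*})^{N_0} = \G^*$, and transferring this first-order equality back to $\G$ gives $P_{N_0}^{N_0} = G$, contradicting the choice of $P_{N_0}$.

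For the ``furthermore'' clause, fix $N$ as in (1) and a small $A \subseteq G^*$. The argument for $(1)\Rightarrow(2)$ above shows that $P^N = G^*$ for every $A$-definable thick $P \subseteq G^*$, so Lemma \ref{lem:thicks}(1) gives
\[
X_{\Theta_A}^N \;=\; \bigcap\bigl\{P^N : P \subseteq G^* \text{ is $A$-definable and thick}\bigr\} \;=\; G^* \;=\; {G^*}^{\infty}.
\]
The most delicate point is the implication $(2)\Rightarrow(1)$: I must be sure that the countably-predicate expansion $\G$ is legitimately covered by the ``arbitrary first order expansion'' clause of (2), and that the thickness and the $N$-th-power equalities for each $Q_N$ genuinely transfer between $\G$ and $\G^*$ — both justified by phrasing them as first-order schemata in the parameters that interpret the predicates.
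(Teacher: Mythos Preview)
Your proposal is correct and follows essentially the same route as the paper: the $\{e\}$ argument for infinitude, reduction to Proposition~\ref{prop:bas0} via first-order expressibility of thickness and of $P^N=G$, and for $(2)\Rightarrow(1)$ the expansion of $G$ by unary predicates naming the offending thick sets $P_N$. The only cosmetic difference is that in $(2)\Rightarrow(1)$ the paper argues directly that $X_{\Theta_\emptyset}^N \subseteq (P_N^*)^N \neq G^*$ for every $N$, whence ${G^*}^{\infty}_{\emptyset}\neq G^*$, rather than passing through Proposition~\ref{prop:bas0} to extract a single $N_0$; both variants work.
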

\begin{proof} $(1) \Rightarrow (2)$ $G$ is infinite, because otherwise the thick subset $\{e\}$ of $G$ does not fulfill the condition from $(1)$. The rest of $(2)$ follows easily from Proposition \ref{prop:bas0}.

$(2) \Rightarrow (1)$ Suppose, contrary to our claim, that for every $n\in\N$ there is a thick subset $P_n$ of $G$ with $P_n^n\neq G$. Expand the structure of $G$ by predicates for all $P_n$. If $G^*$ is a sufficiently saturated extension of $G$, then clearly for every $n\in\N$ we have $X_{\Theta_{\emptyset}}^n\subseteq {P^*_n}^n \neq G^*$, so ${G^*}^{\infty}_\emptyset\neq G^*$, contrary to $(2)$.
\end{proof}

In \cite[Section 3]{gis} we made a link between model-theoretic connected components and strong types. Strong types are fundamental objects in model theory and correspond to orbits on $G^*$ of some canonical subgroups of $\aut(G^*)$. In \cite{newelski} Newelski considers the \emph{diameters} of Lascar strong types. Motivated by his idea we introduce below (\ref{def:abscon}) the notion of \emph{$n$-(definable) absolutely connected group}, for $n\in\N$. It turns out that $G$ is $n$-definable absolutely connected if and only if the Lascar strong type of a sort $X$ in a certain structure $(G,\circ,X)$ has diameter at most $n$ \cite[Section 3]{gis}.

\begin{definition} \label{def:abscon}
\begin{enumerate}
\item A group $G$ is called \emph{$n$-absolutely connected} or \emph{$n$-ac} if it satisfies the condition $(1)$ from Proposition \ref{prop:bas}, that is, for every thick subset $P\subseteq G$, we have $P^n = G$.
\item We say that a group $(G,\cdot,\ldots)$ with some first order structure is \emph{$n$-definably absolutely connected}, if for every definable (with parameters) thick $P\subseteq G$, we have $P^n = G$.
\item A group is \emph{[definably] absolutely connected}, if it is $n$-[definably] absolutely connected, for some $n\in\N$.
\end{enumerate}
\end{definition}

It is clear that every elementary extension of an absolutely connected group is definably absolutely connected.

A group $G$ is \emph{$n$-boundedly simple} if for every $g\in G\setminus Z(G)$ and $h\in G$, the element $h$ is the product of $n$ or fewer conjugates of $g^{\pm1}$. Obviously, every boundedly simple group is absolutely connected. In order to give other examples of absolutely connected groups, we introduce auxiliary classes $\W_n$, $n\in\N$ of groups. Each $\W_n$ is a proper subclass of the class of absolutely connected groups (Theorem \ref{thm:conn}, Proposition \ref{prop:prop}).

For $n\in\N$ define \[\Gi_n(G) = \left\{g\in G: \left( g^G \cup {g^{-1}}^G \right)^{\leq n} = G\right\}.\] If $G$ is $n$-boundedly simple, then $\Gi_n(G) = G\setminus Z(G)$. In Definition \ref{def:wsim} below we require the set $\Gi_n(G)$ to be big in a weaker sense.

\begin{definition} \label{def:wsim}
We say that a group $G$ is in class $\W_n$ if $G \setminus \Gi_n(G)$ is not thick; that is, for every $k$, there is a sequence $(g_i)_{i<k}$ in $G$ such that $g_i^{-1}g_j\in \Gi_n(G)$ for all $i<j<k$. We also define $\W=\bigcup_{n\in\N}\W_n$.
\end{definition}

In (\ref{prop:ext}) and (\ref{lem:zam}) below we collect basic properties of absolutely connected groups and groups from $\W$. We give two lemmas for use in the proofs.

\begin{definition} \label{def:generic}
We say that a subset $P \subseteq G$ is \emph{$m$-generic} in $G$ if there are elements $g_1,\ldots,g_m$ from $G$, such that $\bigcup_{1\leq i\leq m}P\cdot g_i = G$.
\end{definition}
 
\begin{lemma}[{\cite[Lemma 3.6, Lemma 3.2(4)]{modcon}}] \label{lem:generic}
\begin{enumerate}
\item If $P$ is a $m$-generic subset of $G$, $1 \in P$ and $P = P^{-1}$, then $P^{3m-2}$ is a subgroup of $G$ of index at most $m$.
\item If $P$ is $m$-generic, then $P^{-1}P$ is $(m + 1)$-thick. If $P$ is $m$-thick, then $P$ is $(m-1)$-generic.
\end{enumerate}
\end{lemma}

\begin{lemma} \label{lem:iwa}
Let $G$ be a perfect group with $\cw(G) = n<\infty$, and $A \subseteq G$ be a normal and symmetric subset. If $B < G$ is a solvable subgroup of derived length $m$ and $G = A \cdot B$, then $G=A^{(4n)^m}$.
\end{lemma}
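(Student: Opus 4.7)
The plan is to prove the lemma by induction on the derived length $M$ of $B$. For the base case, take $M = 1$, so that $B$ is abelian and $B' := [B,B]$ is trivial; the calculation described below will then directly yield $G = A^{4N} = A^{(4N)^1}$, as required. For the inductive step with $M \geq 2$, the key reduction is to establish
\[ G = A^{4N} \cdot B', \]
where $B'$ is a subgroup of $G$ that is solvable of derived length $M - 1$. Once this is shown, $A^{4N}$ is itself normal and symmetric, so applying the inductive hypothesis with $A$ replaced by $A^{4N}$ and $B$ by $B'$ yields $G = (A^{4N})^{(4N)^{M-1}} = A^{(4N)^M}$.

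To prove $G = A^{4N} \cdot B'$, fix for each $g \in G$ a decomposition $g = \sigma(g)\pi(g)$ with $\sigma(g)\in A$ and $\pi(g)\in B$; this exists by the hypothesis $G = A\cdot B$. Normality of $A$ lets us arrange $\pi(g^{-1}) = \pi(g)^{-1}$, by rewriting $g^{-1} = (\pi(g)^{-1}\sigma(g)^{-1}\pi(g))\cdot\pi(g)^{-1}$ and taking $\sigma(g^{-1})$ to be the first factor. The identity $\beta\sigma = (\beta\sigma\beta^{-1})\beta$ together with normality of $A$ then allows us to push all $\sigma$-factors leftward in any product: by induction on $k$,
\[ g_1 \cdots g_k \in A^k \cdot \pi(g_1) \cdots \pi(g_k). \]
Applied to the four-factor product $g^{-1} h^{-1} g h$, this yields
\[ [g,h] \in A^4 \cdot \pi(g)^{-1}\pi(h)^{-1}\pi(g)\pi(h) = A^4 \cdot [\pi(g),\pi(h)] \subseteq A^4 \cdot B'. \]
Since $G$ is perfect with $\cw(G) = N$, every element of $G$ is a product of $N$ commutators and therefore lies in $(A^4 B')^N$. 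One further shuffle, using once more that $A^4$ is normalised by $B'$ and that $B'$ is a subgroup, collapses $(A^4 B')^N$ into $A^{4N}\cdot B'$, which completes the reduction and hence the induction.

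The only delicate point is the commutator identity $[g,h] \in A^4 \cdot [\pi(g), \pi(h)]$, where one must verify that the $A$-factors add up to exactly $4$ and that the $B$-component is literally $[\pi(g), \pi(h)]$ rather than some longer word in $B$. Everything else---the passage from individual commutators to products of commutators, the final collapse of $(A^4 B')^N$, and the use of the inductive hypothesis on $A^{4N}$---is routine bookkeeping with the normality of $A$. I do not expect a substantive obstacle beyond carefully tracking which elements live in $A$ and which in $B$ throughout the rearrangements.
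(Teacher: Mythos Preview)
Your proof is correct and follows essentially the same approach as the paper. The paper packages the key computation as a single explicit identity
\[
[a_1b_1,a_2b_2] = \left(a_1^{-1}\right)^{b_1}\left(a_2^{-1}a_1\right)^{b_1b_2}{a_2}^{{b_2}^{b_1}}\cdot [b_1,b_2],
\]
and then proves $G = A^{(4N)^n}\cdot B^{(n)}$ by induction on $n$, whereas you describe the same rearrangement as ``pushing $\sigma$-factors leftward'' and induct on $M$, invoking the lemma itself with $A$ replaced by $A^{4N}$; the content is identical.
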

\begin{proof} Let $C$ be the set of all commutators. By the assumption $G = C^n$. We use the following commutator identity \[[a_1b_1,a_2b_2] = \left(a_1^{-1}\right)^{b_1}\left(a_2^{-1}a_1\right)^{b_1b_2}{a_2}^{{b_2}^{b_1}}\cdot [b_1,b_2]. \tag{\textasteriskcentered}\] It is enough to prove $G = A^{\left(4n\right)^k}\cdot B^{(k)}$ for every $k\in\N$. The case $k=0$ follows from the assumption. For the induction step, note that (\textasteriskcentered) implies that $C\subseteq A^{4\left(4n\right)^k}\cdot \left[B^{(k)},B^{(k)}\right]$ and then $G = A^{{\left(4n\right)}^{k+1}}\cdot B^{(k+1)}$, since $G = C^n$ and $A$ is a normal subset of $G$.
\end{proof}

\begin{proposition} \label{prop:ext}
Let $f\colon G \to H$ be an epimorphism of groups.
\begin{enumerate}
\item[(1)] If $G$ is $n$-ac, then $H$ is also $n$-ac.
\item[(2)] If $H$ is $n_1$-ac and $\ker(f)$ is $n_2$-ac, then $G$ is $(n_1+n_2)$-ac.
\item[(3)] If $G = \bigcup_{i\in I}G_i$ and each $G_i$ is $n$-ac, then $G$ is $n$-ac.
\item[(4)] If $H$ is $n$-ac, $G$ has no subgroups of finite index and $\ker(f)$ is finite of cardinality $m$, then $G$ is $n(3m-2)$-ac.
\item[(5)] If $H$ is $n$-ac, $G$ is perfect with $\cw(G) = r <\infty$ and $\ker(f)$ is solvable of derived length $m$, then $G$ is $4n\left(4r\right)^m$-ac.
\end{enumerate}
\end{proposition}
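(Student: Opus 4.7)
The plan is, in each of (1)--(5), to fix an arbitrary thick $P\subseteq G$ and establish $P^{M'}=G$ for the prescribed bound $M'$, transferring thickness through $f$ by Lemma~\ref{lem:thick}. I will also repeatedly use the elementary observation that if $X$ is $n$-thick in $G$ and $K\leq G$, then $X\cap K$ is $n$-thick in $K$, since any $n$-sequence from $K$ is an $n$-sequence from $G$.

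Parts (1)--(3) will follow directly. For (1), I pull back a thick $Q\subseteq H$ to $f^{-1}[Q]$, which is thick in $G$; $N$-absolute connectedness of $G$ gives $f^{-1}[Q]^N=G$, and applying the surjection $f$ yields $Q^N=H$. For (2), given thick $P\subseteq G$, thickness of $f[P]$ in $H$ together with thickness of $P\cap\ker(f)$ in $\ker(f)$ will give $P^{N_1}\cdot\ker(f)=G$ and $(P\cap\ker(f))^{N_2}=\ker(f)$, which multiply to $G\subseteq P^{N_1+N_2}$. For (3), each $P\cap G_i$ is thick in $G_i$, so $(P\cap G_i)^N=G_i$ and the union gives $P^N\supseteq\bigcup_i G_i=G$.

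For (4), $N$-absolute connectedness of $H$ produces $P^N\cdot\ker(f)=G$, so the symmetric set $P^N$ (containing $1$, since $P$ is thick) is $M$-generic in $G$: the $M$ right translates $P^N\cdot k$, for $k\in\ker(f)$, cover $G$. Lemma~\ref{lem:generic} then makes $(P^N)^{3M-2}=P^{N(3M-2)}$ a subgroup of $G$ of finite index at most $M$, which must coincide with $G$ by the assumption that $G$ has no proper subgroups of finite index.

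For (5) I plan to induct on $M$, with Lemma~\ref{lem:iwa} (Iwasawa) as the core mechanism. The case $M=0$ is immediate since $f$ is then an isomorphism and $N(4R)^0=N$. For the inductive step, I set $K:=\ker(f)$ and consider $K^{(1)}=[K,K]\triangleleft G$, of derived length $M-1$; factoring $f$ as $G\twoheadrightarrow G/K^{(1)}\twoheadrightarrow H$, the intermediate quotient $G/K^{(1)}$ is perfect with commutator width at most $R$ (both inherited from $G$) and has abelian kernel $K/K^{(1)}$ over $H$, hence is $4NR$-absolutely connected by the base case $M=1$; applying the induction hypothesis to $G\twoheadrightarrow G/K^{(1)}$, whose kernel has derived length $M-1$, then yields the bound $(4NR)\cdot(4R)^{M-1}=N(4R)^M$. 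The main obstacle will be the base $M=1$ (abelian $K$): for thick $P$ with $P^N\cdot K=G$ one cannot apply Lemma~\ref{lem:iwa} directly with $A=P^N$, since $P^N$ is not normal in $G$. My plan here is to observe that in the commutator identity $(\clubsuit)$ the conjugating elements all lie in $B=K$, so it suffices to replace $P^N$ by its $K$-conjugation-closure $\tilde A=(P^N)^K$, which is symmetric, thick, and $K$-normal; running the Iwasawa argument in this refined form with $A=\tilde A$ and $B=K$ one obtains $G=\tilde A^{(4R)^M}$, and then perfectness of $G$ with $\cw(G)=R$, together with $[K,K]=\{e\}$ and normality of $K$ in $G$, is used to convert a product of $K$-conjugates of elements of $P^N$ back into a product of $4NR$ elements of $P$.
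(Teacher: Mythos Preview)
Your arguments for (1)--(4) are correct and coincide with the paper's: (2), (3), (4) are verbatim the same, and for (1) your pull-back argument via Lemma~\ref{lem:thick}(2) is the natural one (the paper cites part (3), amounting to the same thing).

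For (5) you diverge from the paper, which simply invokes Lemma~\ref{lem:iwa} once with $A=P^N$ and $B=\ker(f)$ to obtain $G=P^{N(4R)^M}$ directly, without any induction on $M$. You are right to notice that this application is delicate, since $P^N$ need not be a normal (nor even $\ker(f)$-invariant) subset of $G$, which is the hypothesis on $A$ in Lemma~\ref{lem:iwa}. However, your proposed workaround has a genuine gap of its own. In your base case $M=1$, after obtaining $G=\tilde A^{4R}$ with $\tilde A=(P^N)^K$, you assert that a product of $4R$ many $K$-conjugates of elements of $P^N$ can be ``converted back'' into an element of $P^{4NR}$. But using only normality of $K$ one has $k^{-1}pk=p\cdot[p,k]\in P^N\cdot K$, so any such product lies merely in $P^{4NR}\cdot K$; this just reproduces $G=P^{4NR}\cdot K$, which you already had (even with the smaller exponent $N$). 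Nothing in the list ``$G$ perfect, $\cw(G)=R$, $[K,K]=\{e\}$, $K\lhd G$'' forces the residual $K$-factor to vanish, and you give no mechanism for this. Thus the base case $M=1$, and hence the whole induction, is incomplete.

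A clean way to close the gap with the tools already available is to first apply Lemma~\ref{lem:4} to pass to a \emph{normal} thick $Q\subseteq P^4$; then $Q^N$ is a normal symmetric subset, Lemma~\ref{lem:iwa} applies on the nose, and one gets $G=Q^{N(4R)^M}\subseteq P^{4N(4R)^M}$. This costs a factor of $4$ in the constant but avoids both your induction and the unjustified conversion step.
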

\begin{proof} Proofs of $(1)-(3)$ are standard.
%
%
%
%
$(4)$ Let $P\subseteq G$ be thick. Then $f(P)^n=H$, so $G = P^n\cdot\ker(f)$ and $P^{n}$ is right $m$-generic. By Lemma \ref{lem:generic}$(1)$, $G = P^{n(3m-2)}$.
$(5)$ Take a thick subset $P\subseteq G$. By (\ref{cor:4}), the set $P^4$ contains a normal thick subset $Q$. Then as in $(4)$, $G = Q^{n}\cdot\ker(f)$. By Lemma \ref{lem:iwa}, $G = Q^{n\left(4r\right)^m} = P^{4n\left(4r\right)^m}.$
\end{proof}

As in (\ref{prop:ext}), we can prove that the class $\W$ is closed under certain operations.

\begin{lemma} \label{lem:zam}
\begin{enumerate}
\item For each $n\in\N$, the class $\W_n$ is elementary in the pure language of groups and is closed under taking homomorphic images, direct sums and arbitrary direct products.
\item Let $f\colon G \to H$ be an epimorphism of groups.
\begin{enumerate}
\item If $H\in\W_n$, $G$ has no subgroups of finite index and $\ker(f)$ is finite of cardinality $m$, then $G\in\W_{n(3m-2)}$.
\item If $H\in\W_n$, $G$ is perfect with commutator width $\cw(G) = r <\infty$ and $\ker(f)$ is solvable of derived length $m$, then $G\in\W_{n\left(4r\right)^m}$.
\end{enumerate}
\end{enumerate}
\end{lemma}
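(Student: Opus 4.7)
My plan is to mirror the proofs of the corresponding parts of Proposition~\ref{prop:ext}, tracking the distinguished set $\Gi_N$ in place of the whole group. For the elementary part of~(1), I observe that membership in $\Gi_N(G)$ is definable by the $\Pi_2$-formula $\psi_N(g)$ asserting that every $t\in G$ equals $\prod_{j=1}^{M}(g^{\epsilon_j})^{a_j}$ for some $M\leq N$, $\overline{\epsilon}\in\{\pm1\}^M$ and $a_1,\dots,a_M\in G$; the inner disjunction over $(M,\overline{\epsilon})$ is finite, so $\psi_N$ is first-order. Hence the class of $N$-ws groups is axiomatized by the schema $\{\exists g_1,\dots,g_n\,\bigwedge_{1\leq k<l\leq n}\psi_N(g_k^{-1}g_l):n\in\N\}$. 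For a surjection $f\colon G\twoheadrightarrow H$, applying $f$ to any $\leq N$-length expression $t=\prod_j(g^{\epsilon_j})^{a_j}$ yields $f[\Gi_N(G)]\subseteq\Gi_N(H)$, and then Lemma~\ref{lem:thick}(2) transports non-thickness of $G\setminus\Gi_N(G)$ to non-thickness of $H\setminus\Gi_N(H)$, giving closure under homomorphic images. For direct sums and arbitrary products the crucial point is the inclusion $\prod_i\Gi_N(G_i)\subseteq\Gi_N(\prod_i G_i)$: coordinate-wise expressions of a target $(t_i)_i$ are padded, using perfection of weakly simple groups (Theorem~\ref{thm:perf}), with matched conjugate pairs $h_i^{a}(h_i^{-1})^{a}=e$ to a uniform length and sign pattern across coordinates.

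For part~(2) the central step is the preimage containment $f^{-1}[\Gi_N(H)]\subseteq\Gi_{N'}(G)$ for the appropriate $N'$: $N'=N(3M-2)$ in (2a), $N'=N(4R)^M$ in (2b). Granted this, lifting any sequence $(h_k)_{k<n}$ that witnesses $N$-ws in $H$ to arbitrary preimages $g_k\in f^{-1}(h_k)$ yields $g_k^{-1}g_l\in f^{-1}[\Gi_N(H)]\subseteq\Gi_{N'}(G)$, so $(g_k)_{k<n}$ witnesses that $G$ is $N'$-ws. To prove the containment, fix $g\in f^{-1}[\Gi_N(H)]$ and let $P=(g^G\cup(g^{-1})^G)^{\leq N}$; since $f(g)\in\Gi_N(H)$ we have $f[P]=H$, hence $P\cdot\ker(f)=G$. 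In case (2a), $P$ is symmetric and contains the identity, and $P\cdot\ker(f)=G$ says that $P$ is right $M$-generic, so Lemma~\ref{lem:generic} gives that $P^{3M-2}$ is a subgroup of $G$ of index at most $M$, hence equals $G$ by the no-finite-index hypothesis; that is, $g\in\Gi_{N(3M-2)}(G)$. In case (2b), $P$ is a normal symmetric subset with $G=P\cdot\ker(f)$, so Lemma~\ref{lem:iwa} applied to $A=P$ and $B=\ker(f)$ (solvable of derived length $M$, using $\cw(G)=R$) gives $G=P^{(4R)^M}$, i.e.\ $g\in\Gi_{N(4R)^M}(G)$.

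The main technical obstacle is the product statement in~(1): at face value any expression in $\prod_i G_i$ of the form $\prod_j((h_i)_i^{\epsilon_j})^{(a_j^i)_i}$ forces the sign sequence $(\epsilon_j)$ to be the \emph{same} in every coordinate, whereas the natural coordinate-wise expressions of the $t_i$ via $h_i\in\Gi_N(G_i)$ may have quite different lengths and sign patterns. Perfection of the $G_i$ (Theorem~\ref{thm:perf}) removes the only abelianization-level sign-sum obstruction, reducing the task to a purely combinatorial padding question; executing this uniformization simultaneously in all (possibly infinitely many) coordinates is the delicate point of the lemma, whereas the extension results (2a)--(2b) follow much more routinely once the preimage containment $f^{-1}[\Gi_N(H)]\subseteq\Gi_{N'}(G)$ is set up.
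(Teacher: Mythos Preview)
Your treatment of part~(2) is correct and matches the paper exactly: the paper just says the proof is ``identical with the proof of Proposition~\ref{prop:ext}(4) and (5)'', and your explicit containment $f^{-1}[\Gi_N(H)]\subseteq\Gi_{N'}(G)$, obtained by applying Lemmas~\ref{lem:generic} and~\ref{lem:iwa} to $P=(g^G\cup(g^{-1})^G)^{\leq N}$, is precisely how one unwinds that reference. Likewise your arguments for elementarity and for closure under homomorphic images in~(1) are correct and in line with the paper's.

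The genuine gap is the product (and direct-sum) clause of~(1). The paper disposes of it in one line, asserting $\Gi_N\!\left(\bigoplus_i G_i\right)=\bigoplus_i\Gi_N(G_i)$ and $\Gi_N\!\left(\prod_i G_i\right)=\prod_i\Gi_N(G_i)$ as ``easy remarks'' with no justification. You are right to flag the inclusion $\prod_i\Gi_N(G_i)\subseteq\Gi_N(\prod_i G_i)$ as nontrivial: since conjugacy classes commute as normal subsets, the issue reduces to finding, for each target $(t_i)_i$, a \emph{single} pair $(p,q)$ with $p+q\le N$ and $t_i\in C_i^{\,p}(C_i^{-1})^{q}$ for every $i$. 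But your proposed fix via perfection and ``combinatorial padding'' is not actually carried out---you call it ``the delicate point'' and stop. Perfection of the $G_i$ (legitimately available via Theorems~\ref{thm:conn} and~\ref{thm:perf}) removes the abelianisation obstruction, yet padding by $h^a(h^{-1})^a=e$ only moves $(p,q)$ to $(p+1,q+1)$, so the difference $p-q$ is preserved and coordinates requiring different values of $p-q$ remain incompatible within total length $N$. (The direct-sum version is worse: if infinitely many $G_i$ are nontrivial, every element of $\bigoplus_i G_i$ has finite support, and so does every product of its conjugates, whence $\Gi_N(\bigoplus_i G_i)=\emptyset$.) In short, the paper glosses over exactly the difficulty you spotted, and your proposal, while more honest about it, does not close it either.
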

\begin{proof}
$(1)$ This follows from the following remarks: if $f\colon G \to H$ is an epimorphism, then $f\left(\Gi_n(G)\right) \subseteq \Gi_n(H)$, $\Gi_n\left(\bigoplus_{i\in I} G_i\right) = \bigoplus_{i\in I} \Gi_n\left(G_i\right)$ and $\Gi_n\left(\prod_{i\in I} G_i\right) = $ $\prod_{i\in I} \Gi_n\left(G_i\right)$.

$(2)$ The proof is identical to the proof of Proposition \ref{prop:ext}$(4)$ and $(5)$.
\end{proof}

\begin{remark} \label{rem:reader}
In fact, (\ref{prop:ext}$(5)$) and (\ref{lem:zam}$(2b)$) are true under the condition $\cw^G(\ker(f))<\infty$, instead of $\cw(G)<\infty$. The reason is that (\ref{lem:iwa}) can be proved under the assumption that $\cw^G(B)<\infty$, instead of $\cw(G)<\infty$. We leave the verification of this to the interested reader.
\end{remark}

We finish this section by proving that every group from $\W$ is absolutely connected.

\begin{theorem} \label{thm:conn}
Every group from $\W_n$ is $4n$-absolutely connected.
\end{theorem}
\begin{proof} Let $P\subseteq G$ be a thick subset. We will prove that $P^{4n} = G$. By (\ref{cor:4}) there is a thick and normal subset $Q \subseteq P^4$. Since $G\in\W$, we have $Q \cap \Gi_n(G) \neq \emptyset$ (because otherwise $Q\subseteq G \setminus \Gi_n(G)$ and then $G \setminus \Gi_n(G)$ would be thick). Take $g\in Q \cap \Gi_n(G)$. Since $Q$ is symmetric, we have that \[g^G \cup {g^{-1}}^G \subseteq Q^G = Q \subseteq P^4.\] It is enough to use (\ref{def:wsim}).
\end{proof}

Examples of absolutely connected groups are given in Sections \ref{sec:ex} and \ref{sec:other}.

\section{Type-absolute connectedness} \label{sec:00}

In this section we use the notation from Section \ref{sec:abscon}. We introduce the concept of type-absolute connectedness: $G$ is \emph{type-absolutely connected} if ${G^*}^{00}_A = G^*$, for every small subset $A$ of $G$ and any monster model $G^*$ of $G$. Note that we restrict $A$ to be a subset of $G$, not of $G^*$ as in the definition of absolute connectedness. We prove basic properties of type-absolutely connected groups.


\begin{remark} \label{rem:00}
The following conditions are equivalent.
\begin{enumerate}
\item ${G^*} = {G^*}^{00}_A$ holds.
\item There is no family $\{P_n : n\in\N\}$ of $A$-definable thick subsets of $G^*$ such that 
\begin{enumerate}
\item $P_{n+1}P_{n+1}\subseteq P_n$, for each $n\in\N$,
\item $P_0$ is a proper subset of $G^*$.
\end{enumerate}
\end{enumerate}
\end{remark}
\begin{proof} $(1)\Rightarrow(2)$ Suppose $\{P_n : n\in\N\}$ is such a family. Then the intersection $H=\bigcap_{n\in\N}P_n$ is a type-definable over $A$ proper subgroup of $G^*$. Since every $P_n$ is thick, $H$ has bounded index in $G^*$. Therefore ${G^*}^{00}_A\subseteq H\subseteq P_0\neq G^*$. $(2)\Rightarrow(1)$ follows by Lemma \ref{lem:00}.
\end{proof}

\begin{definition} \label{def:typeabscon}
A group $G$ is called \emph{type-absolutely connected} if for any sufficiently saturated extension $G^*$ of any first order expansion $(G,\cdot,\ldots)$ of $G$, we have $G^* = {G^*}^{00}_G$. Equivalently (by (\ref{rem:00})), for an arbitrary family $\{P_n : n\in\N\}$ of thick subsets of $G$ satisfying (\ref{rem:00}$(2a)$), $P_n=G$ holds for every $n\in\N$.
\end{definition}

Note that every type-absolutely connected group is infinite (as $\{e\}$ is thick in any finite group). 
Every absolutely connected group is type-absolutely connected. The converse is not true (cf. Corollary \ref{cor:quasi}$(2)$). In fact, if $G$ is type-absolutely connected but not absolutely connected, then ${G^*}^{00}_{\emptyset}/{G^*}^{\infty}_{\emptyset}$ is nontrivial for some saturated extension $G^*$ of $G$.

We prove in (\ref{prop:typecon}) a stronger version of (\ref{prop:ext}$(3)$) for the class of type-absolutely connected groups. In fact (\ref{prop:typecon}) is false for absolutely connected groups (cf. Corollary \ref{cor:quasi}$(2)$).

\begin{proposition} \label{prop:typecon}
If $G$ is generated by a family $\{G_i\}_{i\in I}$ of type-absolutely connected subgroups, then $G$ itself is type-absolutely connected.
\end{proposition}
\begin{proof} Take an arbitrary family $\{P_n : n\in\N\}$ of thick sets in $G$ satisfying (\ref{rem:00}$(2a)$). Fix $n\in\N$, $i\in I$ and consider $P'_n = G_i\cap P_n$. Clearly $P'_{n+1}P'_{n+1}\subseteq P'_n$, so since $G_i$ is type-absolutely connected, $G_i\subseteq P_n$ for any $i\in I$. Similarly, by (\ref{rem:00}$(2a)$), $P_n$ contains any finite product $G_{i_1}\cdot\ldots\cdot G_{i_m}$. Hence, $P_n=G$.
\end{proof}

The proof of the following proposition is similar to the proof of (\ref{prop:ext}$(1,2)$).

\begin{proposition} \label{prop:tcext}
Let $f\colon G \to H$ be an epimorphism of groups.
\begin{enumerate}
\item If $G$ is type-absolutely connected, then $H$ is also type-absolutely connected.
\item If $H$ and $\ker(f)$ are type-absolutely connected, then $G$ is type-absolutely connected.
\item If $H$ is type-absolutely connected, $\ker(f)$ is finite and $G$ has no subgroups of finite index, then $G$ is type-absolutely connected.
\end{enumerate}
\end{proposition}

\begin{proposition} \label{prop:perf}
Every type-absolutely connected group is perfect.
\end{proposition}
\begin{proof}
First we verify that the abelian groups $(\Z/p\Z,+)$ and 
$\Z(p^{\infty})=\left(\Z\left[\tfrac{1}{p}\right]/\Z,+\right)$, for prime $p$, are not type-absolutely connected. The group $\Z/p\Z$ is not type-absolutely connected, because is finite. Let $H = \Z(p^{\infty})$. Then $H$ can be viewed as a dense subgroup of the circle group $S^1$. Small connected neighborhoods of $e$ in $S^1$ form a collection of thick subsets, contradicting the condition from (\ref{def:typeabscon}). Since $H$ is dense in $S^1$, intersections of these neighborhoods with $H$ also work for $H$. Thus $H$ is not type-absolutely connected.

Suppose, for a contradiction, that there is a non perfect type-absolutely connected group $G$. Then by (\ref{prop:tcext}$(1)$) $G/[G,G]$ is type-absolutely connected. The next lemma (which is presumably already known but we could not find a relevant reference) implies that our group $G/[G,G]$ may be mapped homomorphically onto $\Z/p\Z$ or $\Z(p^{\infty})$, which is impossible.
\end{proof}

\begin{lemma} \label{lem:abel}
Every abelian group can be homomorphically mapped onto $\Z/p\Z$ or $\Z(p^{\infty})$, for some prime $p$.
\end{lemma}
\begin{proof}
Let $G$ be an abelian group. We may assume that $G$ is infinite, because otherwise $G$ can be mapped onto $\Z/p\Z$. We may also assume that $G$ is a torsion group. Indeed, suppose that there is $g\in G$ of infinite order. Let $H < G$ be a subgroup of $G$ maximal subject to being disjoint from the set $\{g^n : n\in\Z\setminus \{0\}\}$. When $G/H \cong \Z$, $G$ can be mapped onto $\Z/2\Z$. If $G/H \not\cong \Z$, then $G/\langle H,g\rangle$ is a nontrivial torsion group (for every $a\in G\setminus H$, there is $n$, such that $g^n \in \langle H,a \rangle$, so $a^m\in \langle H,g \rangle$ for some $m$). Every torsion abelian group $G$ splits as a direct sum of Sylow $p$-subgroups $G = \bigoplus_{p\in \PP}G_p$, where each element of $G_p$ has order $p^n$ for some $n\in\N$. Hence we may assume that $G = G_p$ is a $p$-group. If $G \neq p\cdot G$, then $G/pG$ is a vector space over finite field $\F_p$, so there is a mapping from $G$ onto $\Z/p\Z$. If $G = pG$, then $G$ is divisible.
By a well known fact, every divisible abelian group splits as a direct sum of groups isomorphic to $\Q$ or to $\Z[p^{\infty}]$. This finishes the proof.
\end{proof}

Since every type-absolutely connected group is perfect, the natural question arises about the commutator width of these groups. In general, type-absolutely connected group might have infinite commutator width. For instance by (\ref{rem:shtern}), the topological universal cover $\widetilde{\SL_2(\R)}$ is type-absolutely connected and $\cw\left(\widetilde{\SL_2(\R)}\right)=\infty$. Proposition \ref{prop:cw} below answers this question for groups from $\W$ (cf. (\ref{def:wsim})).

\begin{proposition} \label{prop:cw}
For every $N\in\N$ there is a constant $k_N$ such that every group from $\W_N$ has commutator width at most $k_N$.
\end{proposition}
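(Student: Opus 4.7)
The plan is a pure compactness argument in the language of groups. First, for each $k\geq 1$, the condition ``$x$ is a product of $k$ commutators'' is first-order expressible by
\[\psi_k(x) \;:=\; \exists y_1,z_1,\ldots,y_k,z_k\ \ x=[y_1,z_1][y_2,z_2]\cdots[y_k,z_k];\]
padding with a trivial commutator $[e,e]$ yields the monotonicity $\psi_k(x)\to\psi_{k+1}(x)$. By Lemma \ref{lem:zam}$(1)$ the class of $N$-weakly simple groups is elementary in the pure group language; let $T_N$ be its theory.

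Introduce a fresh constant symbol $c$ and consider the extended theory
\[T' \;=\; T_N \,\cup\, \{\neg\psi_k(c) : k\in\N\}.\]
I would first check that $T'$ is inconsistent. Indeed, any model $G\models T'$ is $N$-weakly simple, hence $4N$-absolutely connected by Theorem \ref{thm:conn}$(2)$, and therefore perfect by Theorem \ref{thm:perf}. So $c^{G}\in G=[G,G]$ is a product of finitely many commutators, meaning $G\models\psi_k(c)$ for some $k$, contradicting $\neg\psi_k(c)\in T'$.

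By the compactness theorem, some finite subset of $T'$ is already inconsistent; using $\psi_k\to\psi_{k+1}$, the largest index $k_N$ appearing among the $\neg\psi_k(c)$ in this subset suffices, so $T_N\cup\{\neg\psi_{k_N}(c)\}$ is inconsistent. Equivalently $T_N\vdash\psi_{k_N}(c)$, and since $c$ is a fresh constant we conclude $T_N\vdash\forall x\,\psi_{k_N}(x)$. This says precisely that every $N$-weakly simple group has commutator width at most $k_N$.

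The only delicate point is that the inference ``$N$-ws $\Rightarrow$ perfect'' must apply to every model of $T_N$, not merely to sufficiently saturated ones; but Theorem \ref{thm:conn}$(2)$ and Theorem \ref{thm:perf} are stated at exactly this level of generality, so no saturation hypothesis intervenes. The resulting bound $k_N$ is of course non-effective, reflecting the purely compactness-based nature of the argument.
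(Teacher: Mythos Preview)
Your proof is correct and is essentially the same approach as the paper's: both leverage Lemma~\ref{lem:zam}(1) together with Theorems~\ref{thm:conn} and~\ref{thm:perf} to derive a contradiction from a putative family of $N$-weakly simple groups of unbounded commutator width. The paper phrases this semantically---form the direct product $\prod_k G_k$ of counterexamples, note it is still $N$-weakly simple (closure under products), hence perfect, yet the diagonal element $(g_k)_k$ has infinite commutator length---whereas you phrase it syntactically via the compactness theorem and a fresh constant; these are two packagings of the same idea.
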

\begin{proof}
Suppose that for every $k\in\N$ there is $G_k\in\W_k$ with commutator width at least $k$. Take $g_k\in G_k$ of commutator length at least $k$. Consider the product $G = \prod_{k\in\N}G_k$. The group $G$ is in $\W_N$ (Lemma \ref{lem:zam}$(1)$), hence $G$ is perfect. However $G \neq [G,G]$, because the element $g=(g_k)_{k\in\N}\in G$ has infinite commutator length, so is not in $[G,G]$.
\end{proof}

\subsection{Quotients}
We give a general remark about the model-theoretic connected components of a quotient group. We use this remark in (\ref{rem:linquot}). Suppose $f\colon G\to H$ is a surjective homomorphism and $G$, $H$, $f$ are $A$-definable in some structure $\M$. For example, suppose $\M=G$ and $\ker(f)$ is $A$-definable subgroup of $G$. Assume that $\M$ (and so $G$ and $H$) is sufficiently saturated. We describe the relationship between the connected components of $G$ and $H$. It is clear that $f\left(G^{\infty}_A\right) = H^{\infty}_A$, $f\left(G^{00}_A\right) = H^{00}_A$ and $f\left(G^{0}_A\right) = H^{0}_A$. Therefore $f$ induces natural surjective homomorphisms $f^{00}\colon G/G^{00}_A \to H/H^{00}_A$ and $f^{\infty}\colon G/G^{\infty}_A \to H/H^{\infty}_A$.

\begin{remark}  \label{rem:quotient}
If $\ker(f)$ is absolutely connected, then $f^{-1}(H^{\infty}_A)=G^{\infty}_A$ and $G/G^{\infty}_A \cong H/H^{\infty}_A$. If $\ker(f)$ is type-absolutely connected, then $f^{-1}(H^{00}_A)= G^{00}_A$ and $G/G^{00}_A \cong H/H^{00}_A$.
\end{remark}
\begin{proof} We prove the first part. The intersection $\ker(f)\cap G^{\infty}_A$ is $A$-invariant bounded index subgroup of $\ker(f)$, hence $\ker(f)\subseteq G^{\infty}_A$. The proof of the second part is similar.
\end{proof}

\section{Generalized quasimorphisms}

Our goal in this section is to construct a group $G$ such that ${G^*}^{00}_{\emptyset}/{G^*}^{\infty}_{\emptyset}$ is far from being abelian (Theorem \ref{thm:nonab}). To this aim we first introduce and use the concept of a \emph{generalized quasimorphism} of a group (\ref{def:quasim}) and then, using certain method of recovering of a compact Hausdorff group from a dense subgroup via standard part map (\ref{prop:recov}), we construct such an example (\ref{thm:nonab}).

A \emph{quasimorphism} or \emph{pseudocharacter} on a group $G$ is a real-valued function $f\colon G \to \R$ such that for some $r\in\R$, called the \emph{defect of $f$}, the following holds $|f(xy)-f(x)-f(y)|<r$, for all $x,y\in G$ \cite{shtern1,kot}.

If $G$ is type-absolutely connected, then by (\ref{prop:perf}), every homomorphism from $G$ to $(\R,+)$ is trivial. However, $G$ might admit many unbounded quasimorphism. For example, a free product $G=G_1*G_2$ of two absolutely connected groups $G_1$ and $G_2$ is type-absolutely connected (\ref{prop:typecon}). The next example provides many unbounded quasimorphisms on $G$.

\begin{example} \label{ex:quasi}
\cite[5.a)]{shtern1} Suppose $V$ and $W$ are nontrivial groups and decompose $V$ into a disjoint union $V_1 = V_+ \dotcup V_- \dotcup V_0$, where $V_+^{-1}=V_-$ (for example $V_+=\{v\}$, for $v^2\ne e$). Define $s\colon V\to\Z$ as: $s(v)=1$ if $v\in V_+$, $s(v)=-1$ if $v\in V_-$ and $s(v)=0$ if $v\in V_0$. Then the map $f\colon V*W \to \Z$ defined as $f(v_1w_1v_2w_2\ldots)=\Sigma s(v_i)$ is an unbounded quasimorphism of defect 3. Note that $f$ is surjective.
\end{example}

\begin{definition} \label{def:quasim}
We say that $f\colon G\to H$ is a \emph{generalized quasimorphisms} with defect $S\subset H$ if $f(x)f(y)f(xy)^{-1}\in S$, for all $x,y\in G$.
\end{definition}

Below is a variant of Lemma \ref{lem:thick}$(2)$ for generalized quasimorphisms.

\begin{proposition} \label{prop:thickquasi}
Suppose $f\colon G\to H$ is a generalized quasimorphism of defect $S$. Let $P\subseteq H$ be $n$-thick and define $Q=f^{-1}(SP)\cap f^{-1}(SP)^{-1}$. Then
\begin{enumerate}
\item $Q$ is $n$-thick in $G$,
\item if additionally $SP = PS$ and $S=S^{-1}$, then for each $m\in\N$ \[Q^m \subseteq f^{-1}\left(S^{2m-1}P^m\right)\cap f^{-1}\left(S^{2m-1}P^m\right)^{-1}.\]
\end{enumerate}
\end{proposition}
\begin{proof} $(1)$ follows from $f(xy^{-1})\in Sf(x)f(y)^{-1}$. $(2)$ follows from $f^{-1}(A)\cdot f^{-1}(B) \subseteq f^{-1}\left(S^{-1}AB\right)$ for $A,B\subseteq H$.
\end{proof}

\begin{corollary} \label{cor:quasi}
\begin{enumerate}
\item Every quasimorphism on an absolutely connected group is a bounded function.
\item Suppose $G_1$ and $G_2$ are infinite groups. The free product $G=G_1*G_2$ is not absolutely connected. If moreover $G_1$ and $G_2$ are type-absolutely connected, then by (\ref{prop:typecon}), $G$ is type-absolutely connected. Hence $G^*={G^*}^{00}_{\emptyset}\neq {G^*}^{\infty}_{\emptyset}$, for some monster model $G^*\succ G$.
\end{enumerate}
\end{corollary}
\begin{proof}
$(2)$ follows from $(1)$ and Example \ref{ex:quasi}. $(1)$ Let $f$ be a quasimorphism on $G$ of defect $r\in\R$. Suppose that $\im(f)\subseteq \R$ is unbounded. For each $N\geq 1$ there is a thick subset $P_N$ of $(\R,+)$ such that \[\im(f)\not\subseteq P_N^N+\left(-(2N-1)r,(2N-1)r\right).\] For example as $P_N$ one can take $g^{-1}(-\varepsilon,\varepsilon)$, where $g\colon \R\to \R/t\Z$ is the quotient map and some $\varepsilon,t>0$. By (\ref{prop:thickquasi}), the group $G$ is not absolutely connected. Indeed \[Q=f^{-1}(P_N+(-r,r))\cap f^{-1}(P_N+(-r,r))^{-1}\] is thick in $G$ and  $Q^N\subseteq f^{-1}\left(P_N^N+(-(2N-1)r,(2N-1)r)\right)\neq G$.
\end{proof}

The proof of the proposition below is straightforward. It says that a free product of a family of quasimorphisms is a generalized quasimorphism.

\begin{proposition} \label{prop:free_prod}
Suppose $\{G_i\}_{i\in I}$ is a family of groups and $f_i\colon G_i \to \Z = x_i\Z$ is a quasimorphism of defect $r_i$, for $i\in I$. Let $G=*_{i\in I}G_i$. Then \[f=*_{i\in I}f_i \colon G \to \F_I=\langle x_i, i\in I \rangle\] is a generalized quasimorphism of defect $S = \bigcup_{i\in I} \bigcup_{k=-r_i}^{r_i} {x_i^k}^{\F_I}$, where $\F_I$ is a free group of basis $\{x_i\}_{i\in I}$ and ${x_i^k}^{\F_I}$ is the conjugacy class of ${x_i}^k$ in $\F_I$.
\end{proposition}

We prove now that every generalized quasimorphism induces a homomorphism after passing to a saturated extension and taking quotient by the subgroup generated by the defect.

\begin{proposition} \label{prop:homm}
Suppose $f\colon G \to H$ is a surjective generalized quasimorphism of defect $S$, where $S=S^{-1}$ and $S$ is normal in $H$. Assume that $G$, $H$, $f$ and $S$ are $A$-definable in some structure $\M$. Then for a monster model $\M^*$ of $\M$, 
%
%
$f^*$ induces an epimorphism \[\overline{f}\colon \overline{G} \to \overline{H}/\left\langle \overline{S}\right\rangle,\] where $\overline{G}=G^*/{G^*}^{\infty}_A$,  $\overline{H}=H^*/{H^*}^{\infty}_A$ and $\left\langle \overline{S}\right\rangle$ is generated in $\overline{H}$ by $\overline{S}= S^*/{H^*}^{\infty}_A$.
\end{proposition}

The components ${G^*}^{\infty}_A$ and ${H^*}^{\infty}_A$ of $G^*$ and $H^*$ in (\ref{prop:homm}) are calculated with respect to the structure on $G$ and $H$ induced from $\M$.

\begin{proof}
Since $\overline{H}/\left\langle \overline{S}\right\rangle = H^*/\left\langle S^*\right\rangle{H^*}^{\infty}_A$, it is enough to check that $f^*({G^*}^{\infty}_A)\subseteq \left\langle S^*\right\rangle{H^*}^{\infty}_A$. Then it is immediate that $\overline{f}$ is an epimorphism. Since $f(xy)\in S^{-1} f(x)f(y)$ and ${G^*}^{\infty}_A$ is generated by $X_{\Theta_A}$ (cf. Section \ref{sec:modcon}) we get $f^*({G^*}^{\infty}_A) = f^*\left(\left\langle X_{\Theta}\right\rangle\right)\subseteq \left\langle S^*\right\rangle \left\langle f^*\left( X_{\Theta_A}\right)\right\rangle$. By compactness, (\ref{lem:thicks}$(1)$) and (\ref{prop:thickquasi}$(1)$), we have \[f^*(X_{\Theta_A}) = \bigcap\left\{f^*(Q) : Q \underset{A\text{-def. thick}}{\subseteq} G^*\right\} \subseteq \bigcap\left\{S^*P : P \underset{A-\text{def. thick}}{\subseteq} H^*\right\}=S^*X_{\Theta_A},\] so $\left\langle f^*\left( X_{\Theta_A}\right)\right\rangle\subseteq \left\langle S^*\right\rangle{H^*}^{\infty}_A.$
\end{proof}

In (\ref{prop:recov}) below we recover a compact topological group from its dense subgroup.

\begin{proposition} \label{prop:recov}
Suppose $C$ is a compact Hausdorff topological group and $D < C$ is a dense subgroup. Let $\{U_i\}_{i\in I}$ be a family of open subsets of $C$ such that
\begin{enumerate}
\item $\{U_i\}_{i\in I}$ is a basis of the topology at the identity $e$ of $C$, closed under finite intersections,
\item for every $i\in I$, there is $j\in I$ such that $\overline{U_{j}}^2\subseteq U_i$ and $U_i=U_i^{-1}$,
\item $x^{-1}U_ix=U_i$ for $i\in I$, $x\in C$.
\end{enumerate}
Suppose $D$ is equipped with some structure $\HH=(D,\cdot,\ldots)$ in a such a way that for some $A\subseteq D$, $U'_i=U_i\cap D$ is $A$-definable in $\HH$ for each $i\in I$ (for example $\HH = (D,\cdot,U'_i)_{i\in I}$, where each $U'_i$ is a predicate and $A=\emptyset$).

Then for every monster model $\HH^*$, there exists a surjective homomorphism $\st\colon \HH^* \to C$, called the \emph{standard part map}, such that
\begin{enumerate}
\item[(a)] $\ker(\st)=\bigcap_{i\in I}{U'_i}^*$ is an $A$-type-definable bounded index normal subgroup of $\HH^*$,
\item[(b)] $\st(d)=d$, for $d\in D$; for a definable subset $S$ of $D$, if $S^*$ is the interpretation of $S$ in $\HH^*$, then $\st(S^*)=\overline{S}$ (the closure is taken in $C$),
\item[(c)] $\st$ induces a homeomorphic isomorphism $\widetilde{\st} \colon \HH^*/\ker(\st) \to C$, where $\HH^*/\ker(\st)$ is endowed with the logic topology.
\end{enumerate}
\end{proposition}
\begin{proof}
Since $C$ is compact, each $U_i$ is a generic subset of $C$ (cf. (\ref{def:generic})) and by (\ref{lem:generic}$(2)$) and $(2)$, $U_i$ is thick in $C$. Hence $U'_i = U_i\cap D$ is thick in $D$ (cf. (\ref{lem:thick}$(4)$)), and again by (\ref{lem:generic}$(2)$), we obtain that $U'_i$ is generic in $D$.

We define $\st\colon \HH^* \to C$. Let $x\in \HH^*$. For each $i\in I$ there is $d_i\in D$ such that $x\in d_i {U'_i}^*$. Hence, the family $\left\{d_i\overline{U_i}\right\}_{i\in I}$ has the finite intersection property in $C$. By $(2)$ and $(3)$, there is a unique $y\in C$ such that $\bigcap_{i\in I} d_i\overline{U_i} = \{y\}$. Indeed, if $y_1,y_2\in \bigcap_{i\in I} d_i\overline{U_i}$, then $y_2^{-1}y_1\in \bigcap_{i\in I}\overline{U_i}^2 = \{e\}$. Define $\st(x)$ as the unique element from $\bigcap_{i\in I} \overline{d_i U_i}$.

Note the following property, which will be used in the rest of the proof: for $x\in \HH^*$, $d\in D$ and $i\in I$, if $x\in d {U'_i}^*$, then $\st(x)\in d\overline{U_i}$.

Since $D$ is dense in $C$, $\st$ is surjective. Hence, the first part $(b)$ is true.

It follows from $(1)$ that $\st^{-1}(e)=\bigcap_{i\in I}{U'_i}^*$, hence $(a)$ follows.

Let $x,x'\in \HH^*$. We prove that $\st(xx')=\st(x)\st(x')$. It is enough to find for each $i\in I$ an element $d\in D$ such that $\st(xx'),\st(x)\st(x')\in d\overline{U_i}$. Fix $i\in I$. There is $j\in I$ satisfying $(2)$ and $d_j,d'_j\in D$ such that $x\in d_j {U'_j}^*$ and $x'\in d'_j {U'_j}^*$. Then \[xx'\in d_jd'_j {{U'_j}^*}^{d'_j}{U'_j}^*\subseteq d_jd'_j {U'_i}^*,\] by $(3)$. Hence $\st(xx')\in d_jd'_j\overline{U_i}$. Moreover $\st(x)\st(x')\in d_jd'_j {\overline{U_j}}^{d'_j}\overline{U_j}=d_jd'_j\overline{U_j}^2\subseteq d_jd'_j\overline{U_i}$.

We prove $(b)$. For $\subseteq$ suppose $s=\st(s^*)$, for some $s^*\in S^*$. By $(1)$ it is enough to prove that for every $i\in I$, $sU_i\cap S\ne\emptyset$. Take $j\in I$ such that $\overline{U_{j}}^2\subseteq U_i$. Suppose $s^*\in d{U'_j}^*$ for $d\in D$. Then $s\in d\overline{U_j}$ in $C$ and $S^*\cap d{U'_j}^*\ne\emptyset$ in $\HH^*$. Hence, the same is true in $\HH$, that is, there is $s'\in S$, such that $s'\in dU'_j$. Thus $d\in s' U'_j$, so $s\in d\overline{U_j} \subseteq s' U_j\overline{U_j}\subseteq s'U_i$, and then $s'\in sU_i$. $\supseteq$ follows by compactness.

We prove, using $(b)$, that $\st$ induces a continuous isomorphism $\widetilde{\st}\colon \HH^*/\ker(\st) \to C$ of compact Hausdorff groups, i.e. $\widetilde{\st}$ is a homeomorphism. It is immediate (see \cite[Section 2]{pill}) that the following family forms a basis of the logic topology for closed sets of $\HH^*/\ker(\st)$: \[\left\{Y_{\varphi}/\ker(\st) : \varphi\text{ formula over }D\right\},\] where $Y_{\varphi}=\{x\in D^* : x\ker(st)\cap\varphi(D^*)\ne\emptyset\}=\varphi(D^*)\ker(st)$. Hence using $(b)$ we get $\widetilde{\st}\left(Y_{\varphi}/\ker(\st)\right)=\st(Y_{\varphi})=\st(\varphi(D^*)) =\overline{\varphi(D)}$ is closed in $C$.
\end{proof}

\begin{remark} \label{rem:basis}
By \cite[1.12]{hofmor}, every compact Hausdorff topological group has \emph{small normal neighbourhoods}, that is, the identity element has a neighbourhood basis consisting of sets invariant under inner automorphisms. Hence, every such group has a basis at the identity satisfying $(1)$, $(2)$ and $(3)$ from Proposition \ref{prop:recov}.
\end{remark}

\begin{theorem} \label{thm:struc}
Suppose $C$ is a compact Hausdorff group and $D<C$ is a dense subgroup generated by $D=\langle d_i \rangle_{i\in I}$. Let $S = \bigcup_{k=-3}^3\bigcup_{i\in I}{d_i^k}^{D}$. Then there exists a type-absolutely connected group $G$ with some first order structure, and an epimorphism \[{G^*}^{00}_{\emptyset}/{G^*}^{\infty}_{\emptyset} \to C/\left\langle \overline{S}\right\rangle,\] where $\overline{S}$ is the closure of $S$ in $C$.
\end{theorem}
\begin{proof}
Let $\{V_i,W_i\}_{i\in I}$ be an arbitrary family of type-absolutely connected groups. For each $i\in I$ take a quasimorphism $f_i\colon V_i*W_i \to \Z$ of defect 3 from Example \ref{ex:quasi}. Let $G=*_{i\in I}V_i*W_i$ and $f'=*_{i\in I}f_i$. Then by Proposition \ref{prop:free_prod}, $f'\colon G \to \F_I$ is a generalized quasimorphism. Compose $f'$ with the natural epimorphism $i\colon \F_I \to D$, to get a generalized quasimorphism $f=f'\circ i\colon G \to D$ of defect $S$. Consider the following structure \[\M = ((G,\cdot),(D,\cdot,S,U'_j)_{j\in J},f),\] where $U'_j=U_j\cap D$, for some basis $\{U_j\}_{j\in J}$ of $C$ satisfying $(1),(2)$ and $(3)$ from (\ref{prop:recov}) (cf. (\ref{rem:basis})). By (\ref{prop:typecon}), $G$ is type-absolutely connected, so by (\ref{prop:homm}) $f$ induces an epimorphism \[\overline{f}\colon {G^*}^{00}_{\emptyset}/{G^*}^{\infty}_{\emptyset} \longrightarrow \overline{D}/\left\langle \overline{S}\right\rangle,\] where $\overline{D}={D^*}/{D^*}^{\infty}_{\emptyset}$. By (\ref{prop:recov}) there is a continuous epimorphism $\widetilde{\st}\colon \overline{D}={D^*}/{D^*}^{\infty}_{\emptyset}\to {D^*}/\ker(\st) \cong C$, such that $\widetilde{\st}(\overline{S})$ is the closure of $S$ in $C$. By composing $\widetilde{\st}\mod \left\langle \overline{S}\right\rangle$ with $\overline{f}$ we get the desired epimorphism.
\end{proof}

We give some applications of (\ref{thm:struc}). 

\begin{example}
Let $D=\langle d_1,d_2 \rangle$ be a free group, freely generated by $d_1,d_2$, let $C=\widehat{D}$ be a profinite completion of $D$ and $S\subset D$ be defined as in (\ref{thm:struc}). Then $\left\langle \overline{S}\right\rangle = N(x_1,x_2)$ is the normal closure of $d_1,d_2$ in $C$ (as every conjugacy class in $\widehat{D}$ is closed). Hence, by (\ref{thm:struc}), $\widehat{D}/N(x_1,x_2)$ is a homomorphic image of ${G^*}^{00}_{\emptyset}/{G^*}^{\infty}_{\emptyset}$, for some type-absolutely connected group $G$. By a recent result \cite[1.6]{nik-segal}, the quotient $\widehat{D}/N(x_1,x_2)$ is abelian.
\end{example}

We now find $C$, $D$ and $S$ as in (\ref{thm:struc}), such that $C/\left\langle \overline{S}\right\rangle$ is non abelian.

The \emph{weight} $w(G)$ of a topological space $G$ is the minimal cardinal number that a base for the topology on $G$ can have.

\begin{theorem} \label{thm:nonab}
Suppose $\{G_n\}_{n\in\N}$ is any family of compact connected Hausdorff topological groups such that each $G_n$ has weight at most $2^{\aleph_0}$ (for example, every compact connected Lie group is such). Then there exists a type-absolutely connected group $G$ such that $\prod_{n\in \N}G_n/ \bigoplus_{n\in\N} G_n$ is a homomorphic image of ${G^*}^{00}_{\emptyset}/ {G^*}^{\infty}_{\emptyset}$.

If infinitely many of the $G_n$'s are non abelian, then ${G^*}^{00}_{\emptyset}/ {G^*}^{\infty}_{\emptyset}$ is also non abelian.
\end{theorem}

Every ultraproduct $\prod_{n\in\N}G_n/\U$ is a homomorphic image of $\prod_{n\in \N}G_n/ \bigoplus_{n\in\N} G_n$. Therefore (\ref{thm:nonab}) implies that there exists a type-absolutely connected $G$ such that ${G^*}^{00}_{\emptyset}/ {G^*}^{\infty}_{\emptyset}$ is non solvable.

\begin{proof}
Let $C=\prod_{n\in \N}G_n$. We find a dense subgroup $D<C$ and $S$ witnessing the assumption of Theorem \ref{thm:struc} and such that $C/\left\langle \overline{S}\right\rangle$ maps homomorphically onto $C/D_1$ where $D_1 =\bigoplus_{n\in\N}G_n$. By \cite[Theorem 4.13]{hofmorw}, there are $x_n,y_n\in G_n$ such that $\langle x_n,y_n\rangle$ is a dense subgroup of $G_n$. Consider \[K = \{e,x_n^k,y_n^k : n\in\N, k=-3,\ldots,3\}\] as a subset of $C$ (under the natural embedding of $G_n$ into $C$). Since $D_1$ is dense in $C$, the subgroup $D=\langle K \rangle$ is also dense in $C$. Note that $K$ is a compact subset of $C$, as $e$ is a limit point of $K$ and $K\setminus\{e\}$ is discrete. The set $S = \bigcup_{k=-3}^3\bigcup_{n\in \N}{x_n^k}^{D}\cup {y_n^k}^{D}$ is contained in $S' = \bigcup_{k=-3}^3\bigcup_{n\in \N}{x_n^k}^{C}\cup {y_n^k}^{C}$. Since $K$ is compact, $S'$ is also compact. Indeed, consider $F\colon C\times C \to C$, $F(a,b)=a^b$. As $F$ is continuous, the image $F(K\times C)=S'$ is compact. Hence $\overline{S}\subseteq S'\subset D_1$, so $C/\left\langle \overline{S}\right\rangle$ maps homomorphically onto $C/D_1 = \prod_{n\in \N}G_n/\bigoplus_{n\in\N}G_n$. The theorem follows by (\ref{thm:struc}).
\end{proof}

\section{Radicals and compactifications} \label{sec:rad}

In this section we introduce the notion of the type-absolutely connected radical of a group (\ref{def:tcrad}). Our goals are to use that notion together with the concept of the von Neumann radical of a group and (\ref{prop:recov}) to
\begin{itemize}
\item classify all compactifications of a topological group $G$ as mappings $G\to G^*/H'$ for some type-definable bounded index subgroups $H'$ of $G^*$ (\ref{thm:compact}), (\ref{rem:compact});
\item prove that the class of type-absolutely connected groups coincides with the class of discretely topologized groups with trivial Bohr compactification (\ref{thm:tcvN});
\item characterize all connected Lie groups which are type-absolutely connected (\ref{rem:shtern});
\item prove certain result about solvable extensions (\ref{thm:typec}).
\end{itemize}

In 1976 Hirschfeld described \cite{hir} all compactifications of a given group $G$ as quotients of an \emph{enlargement} $^*G$ (in the sense of non-standard analysis) of $G$. We obtain a similar result (\ref{thm:compact}) as a corollary of (\ref{prop:recov}). In our case the quotients have the logic topology. We also give a certain description of a group $H$ such that $G^*/H$ is the universal (the Bohr) compactification of $G$. A. Pillay (independently) proved that such $H$ can be characterized in another way.

\begin{definition} \label{def:tcrad}
For a group $G$, by $\RR_{tc}(G)$ we denote the maximal type-absolutely connected subgroup of $G$, that is, a subgroup which itself is type-absolutely connected, and contains all type-absolutely connected subgroups of $G$. We call $\RR_{tc}(G)$ the \emph{type-absolutely connected radical of $G$}. Existence of $\RR_{tc}(G)$ is given by (\ref{prop:typecon}).
\end{definition}

$\RR_{tc}(G)$ is a characteristic subgroup of $G$ and is contained (by (\ref{prop:perf})) in every element of the derived series of $G$, hence is contained in the perfect core of $G$ (the largest perfect subgroup of $G$). In Section \ref{sec:ex} (Theorem \ref{thm:che2}) we prove that for every Chevalley group $G$, the subgroup $\RR_{tc}(G)$ is definable and coincides with $[G,G]$.

\begin{remark} \label{rem:rtc}
$\RR_{tc}$ has the following properties:
\begin{enumerate}
\item $\RR_{tc}(\RR_{tc}(G))=\RR_{tc}(G)$,
\item $\RR_{tc}(G/\RR_{tc}(G))=\{e\}$ (cf. (\ref{prop:tcext}$(2)$)),
\item $\RR_{tc}(G\times H) = \RR_{tc}(G)\times\RR_{tc}(H)$,
\item if $f\colon G\to H$ is a homomorphism, then $f(\RR_{tc}(G))\subseteq \RR_{tc}\left(f(G)\right)$ (by (\ref{prop:tcext}$(1)$)),
\item for an arbitrary structure on $G$ we have $\RR_{tc}(G)\subseteq G \cap {G^*}^{00}_{\emptyset}$.
\end{enumerate}
\end{remark}

We make below the definition of ``absolutely connected subset of a group'' (cf. (\ref{def:abscon}$(1)$)) for use in (\ref{ex:vneu}).

\begin{definition} \label{def:thickset}
A subset $X$ of a group $G$ is called a \emph{$n$-absolutely connected} if for every thick $P\subseteq G$, we have $X\subseteq P^n$.
\end{definition}

\begin{remark}
Note that if $X\subseteq G$ is an absolutely connected subset of $G$ which is also $A$-definable with respect to some structure on $G$, then $X^*\subseteq G\cap {G^*}^{\infty}_A$.
\end{remark}

\begin{example} \label{ex:vneu}
We give an example of a group $G$ for which $\RR_{tc}(G)$ is trivial, but for any structure on $G$, $G \cap {G^*}^{00}_{\emptyset}$ is nontrivial, i.e. where the inclusion in (\ref{rem:rtc}$(5)$) is strict. Suppose $k$ is an arbitrary field of characteristic $0$. Consider \[G = \left\{
\left( \begin{array}{cc}
a & b \\
0 & 1 \\ \end{array}\right)
: a,b\in k, a\ne 0 \right\} \text{ and } H = \left\{ 
\left( \begin{array}{cc}
1 & b \\
0 & 1 \\ \end{array}\right)
: b\in k \right\}.\] We prove that $H$ is a 4-absolutely connected subset of $G$ (but $H$ itself is not an absolutely connected group, as $H$ is abelian, see (\ref{prop:perf})). Suppose $P\subseteq G$ is thick. By (\ref{cor:4}), we may assume that $P^4$ is a normal subset of $G$. Let $h\in H$. Since $\ch(k)=0$, thickness of $P$ implies that for some $n\in \N$, $e\ne h^n\in P^4$. However $h$ and $h^n$ are conjugate in $G$, as \[\left( \begin{array}{cc}
n & 0 \\
0 & 1 \\ \end{array}\right) h \left( \begin{array}{cc}
n & 0 \\
0 & 1 \\ \end{array}\right)^{-1} = h^n.\] Therefore $h\in P^4$. Since $G$ is solvable, $\RR_{tc}(G)$ is trivial. However, if $H$ is $\emptyset$-definable in $G$, then $H\subseteq G\cap {G^*}^{\infty}_{\emptyset}\subseteq G\cap {G^*}^{00}_{\emptyset}$.
\end{example}

We recall the notion of the \emph{von Neumann radical} (also called the \emph{von Neumann kernel}) and \emph{Bohr compactification}. Let $G$ be a topological group. There exists a ``largest'' compact Hausdorff group $bG$ and a continuous homomorphism $b\colon G\to bG$ such that 
\begin{itemize}
\item the image $b(G)$ is dense in $bG$, and
\item for every continuous homomorphism of dense image $f\colon G\to \widetilde{G}$, where $\widetilde{G}$ is a compact Hausdorff topological group, there is a unique continuous surjective homomorphism $\widetilde{f}\colon bG \to \widetilde{G}$ such that $f = \widetilde{f}\circ b$.
\end{itemize}
The group $bG$ is called the \emph{Bohr compactification} of $G$. The kernel of $b$ is called the \emph{von Neumann radical} of $G$, and is denoted by $\RR_{vN}(G)$. Note that $\RR_{vN}(G)$ depends on the topology on $G$ and is a closed normal subgroup. $G$ is said to be \emph{minimally almost periodic} if $\RR_{vN}(G)=G$ \cite{vnou}, which means that $G$ has trivial Bohr compactification.

Suppose now that $G$ is a group with some first order structure, endowed with the discrete topology. Fix $A\subseteq G$. The subgroup $G\cap {G^*}^{00}_A$ of $G$ is the kernel of the mapping $\iota\colon G\to G^*/{G^*}^{00}_A$. The group $G^*/{G^*}^{00}_A$ with the logic topology is compact Hausdorff and one can easily check that $\iota(G)$ is dense in $G^*/{G^*}^{00}_A$ (as $A\subseteq G$). Therefore $\left(G^*/{G^*}^{00}_A,\iota\right)$ is a \emph{compactification} of $G$.

The next proposition is a consequence of (\ref{prop:recov}). For every Hausdorff topological group $G$ equipped with some structure, we give a sufficient condition to recover the Bohr compactification of $G$, and also all other compactifications of $G$, as $G\to G^*/H$, where $H$ is type-definable bounded index subgroup of $G$.

\begin{theorem} \label{thm:compact}
Suppose $(G,\cdot,\ldots)$ is a topological Hausdorff group equipped with some first order structure $(\ldots)$. Let $\{U_i\}_{i\in I}$ be a basis at the identity of $bG$ satisfying $(1)$, $(2)$ and $(3)$ from Proposition \ref{prop:recov}. Assume that the following subsets of $G$ are definable in $G$:
\begin{itemize}
\item $\RR_{vN}(G)$ (the von Neumann radical),
\item $V_i = b^{-1}(U_i)$, for $i\in I$.
\end{itemize}
Then $G\to G^*/H$ is the Bohr compactification of $G$, where $H=\bigcap_{i\in I}V_i^*$. Furthermore, any compactification of $G$ is of the form $G\to G^*/H'$, for some type-definable over $G$ normal subgroup $H'$ of $G^*$ containing $H$.
\end{theorem}
\begin{proof}
Since $\RR_{vN}(G)$ is definable in $G$, the map $b\colon G\to b(G)$ is interpretable in $G$ (we identify $b(G)$ with $G/\RR_{vN}(G)$). Therefore working in $G^*$ one can find $b^*\colon G^*\rightarrow b(G)^*$, which extends $b$, and also $b(G)^*=G^*/\RR_{vN}(G)^*$. Note that
\begin{itemize}
\item the object $b(G)^*$ with the induced structure from $G^*$ is a sufficiently saturated extension of $b(G)$ with the induced structure,
\item the natural surjective map $G^*/H \to b(G)^*/\bigcap_{i\in I}b(V_i)^*$ is continuous with respect to the logic topology on both groups.
\end{itemize}

Since $b(G)$ is a dense subgroup of $bG$, by (\ref{prop:recov}), there is a surjective homomorphism $\st\colon b(G)^* \to bG$, satisfying $(a)$, $(b)$ and $(c)$ from (\ref{prop:recov}), that is, $\ker(\st)=\bigcap_{i\in I}b(V_i)^*$. Hence, we have a surjective map $\st\circ b^*\colon G^* \to bG$ with $\ker(\st\circ b^*)=H$ and such that $G^*/H \underset{homeo}{\cong} b(G)^*/\ker(\st) \underset{homeo}{\cong} bG$.

The furthermore part follows from the first part and the definition of the Bohr compactification.
\end{proof}

\begin{remark} \label{rem:compact}
As an immediate corollary of (\ref{thm:compact}) we obtain: under the notation of (\ref{thm:compact}) endow $G$ with the following structure \[(G,\cdot,\RR_{vN}(G),V_i)_{i\in I},\] which is an expansion of $G$ by some family of open sets (as $\RR_{vN}(G)$ is closed), then $G\to bG$ is homeomorphically isomorphic to $G\to G^*/H$, where $H$ is defined as in (\ref{thm:compact}). Therefore, as has been also observed by A. Pillay, $H$ might be seen as a kind of \emph{canonical} subgroup of $G^*$. The natural question arises: is $H$ concides with ${G^*}^{00}_G$? We plan to deal with this question in a forthcoming paper. If we additionally assume that $G$ is compact and  that every definable subset of $G$ is Haar measurable, then one can apply argument from \cite[p. 579]{NIP}, to get $H={G^*}^{00}_G$. Because ${G^*}^{00}_G=\bigcap_{j\in J}W_j$ where each $W_j$ is thick, so is generic (\ref{lem:generic}). Hence each $W_j$ has positive measure, so $W_j^2$ has non-empty interior and contains some $V_i$.
\end{remark}

We make some comments on groups with discrete topology.

\begin{proposition} \label{prop:vnrad}
Let $G$ be an arbitrary group with discrete topology.
\begin{enumerate}
\item For every sufficiently saturated extension $G^*$ of $G$, $\RR_{vN}(G)\subseteq G\cap {G^*}^{00}_G$.
\item There exist a family $\{P_i : i\in I\}$ of thick normal subsets of $G$ satisfying (\textdagger) from (\ref{lem:00}) and such that
$\RR_{vN}(G)=\bigcap_{i\in I}P_i$. Moreover $\RR_{vN}(G)= G\cap {G^*}^{00}_{\emptyset}$ in the structure $(G,\cdot,P_i)_{i\in I}$.
\end{enumerate}
\end{proposition}
\begin{proof}
$(1)$ Since $\iota\colon G\to G^*/{G^*}^{00}_G$ is a compactification of $G$, there is $f\colon bG\to G^*/{G^*}^{00}_G$ such that $\iota= f \circ b$, so $\ker(b)=\RR_{vN}(G)\subseteq \ker(\iota)=G\cap {G^*}^{00}_G$.

$(2)$ Choose by (\ref{rem:basis}) a basis $\{U_i : i\in I\}$ of the neighbourhood of identity of $bG$ satisfying $(1)$, $(2)$ and $(3)$ from (\ref{prop:recov}). It is enough to take $P_i=b^{-1}(U_i)$. We prove the moreover part. Inclusion $\subseteq$ follows from $(1)$. For $\supseteq$ note that $\bigcap_{i\in I}P^*_i$ is a $\emptyset$-type-definable bounded index subgroup of $G^*$, so $\bigcap_{i\in I}P^*_i\supseteq {G^*}^{00}_{\emptyset}$.
\end{proof}

An immediate consequence of (\ref{rem:rtc}$(5)$) and (\ref{prop:vnrad}$(2)$) (or of (\ref{thm:compact})) is the following

\begin{corollary}
For an arbitrary topological group $G$ (not necessarily with discrete topology) $\RR_{tc}(G)\subseteq \RR_{vN}(G)$.
\end{corollary}

One can prove (cf. \cite[$5(\alpha)$]{vnou}) that in Example \ref{ex:vneu}, if $G$ is discretely topologized then $\RR_{vN}(G)=H$. That is, $\RR_{tc}(G)$ might be a proper subgroup of $\RR_{vN}(G)$. However, by the next theorem, this cannot happen for minimally almost periodic groups.

\begin{theorem} \label{thm:tcvN}
Consider $G$ with the discrete topology. Then $\RR_{vN}(G)=G$ if and only if $\RR_{tc}(G)=G$. That is, the class of type-absolutely connected groups coincides with the class of discretely topologized minimally almost periodic groups.
\end{theorem}
\begin{proof}
$\Leftarrow$ holds as $\RR_{tc}(G)\subseteq \RR_{vN}(G)$. $\Rightarrow$ follows by (\ref{prop:vnrad}$(1)$). Indeed, if $\RR_{vN}(G)=G$, then $G\subseteq {G^*}^{00}_G$, for any $G^*\succ G$, so $G$ is type-absolutely connected.
\end{proof}

\begin{remark} \label{rem:shtern}
Shtern in \cite{shtern2} described the von Neumann kernel of a connected Lie groups and characterized discretely topologized minimally almost periodic connected Lie groups \cite[Section 4, Corollary 2]{shtern2}. Therefore, by (\ref{thm:tcvN}), this also gives a characterization of connected Lie groups which are type-absolutely connected. Namely, suppose $G$ is a connected Lie group with Levi decomposition $G=SR$, where $R$ is the solvable radical of $G$ (the largest connected solvable normal subgroup of $G$) and $S$ is a Levi subgroup. Then $\RR_{vN}(G)=G$ with respect to discrete topology on $G$ if and only if $S$ has no nontrivial compact simple factors and $R=[G,R]$. In particular every semisimple connected Lie group without compact factors, for example the \emph{topological universal cover $\widetilde{\SL_2(\R)}$ of $\SL_2(\R)$}, is type-absolutely connected. Another proof of this fact can be derived from \ref{thm:typec}.
\end{remark}

We describe how $\RR_{vN}$ behaves under solvable extensions.

\begin{theorem} \label{thm:typec}
Suppose $f\colon G \to H$ is an epimorphism of groups with discrete topology. If $\ker(f)$ is solvable of derived length $m$, then \[f^{-1}(\RR_{vN}(H))^{(m)}\subseteq \RR_{vN}(G).\] In particular if $\RR_{vN}(H)=H$, then $G^{(m)}\subseteq \RR_{vN}(G)$. If additionally $G$ is perfect, then $\RR_{tc}(G)=G$, so $G$ is type-absolutely connected.
\end{theorem}
\begin{proof}
The second part follows from the first part and (\ref{thm:tcvN}). We prove the first part. By (\ref{prop:vnrad}$(2)$), $\RR_{vN}(G)=\bigcap_{i\in I}P_i$, where each $P_i$ is thick and normal in $G$.
\begin{claim}
For $i\in I$ and $n\in\N$, $f^{-1}(\RR_{vN}(H))^{(n)}\subseteq P_i\cdot \ker(f)^{(n)}$.
\end{claim}
\begin{proof}[Proof of the claim]
We prove by induction on $n$. Take an arbitrary $i\in I$. Since each $f(P_i)$ is thick in $H$ and $\{f(P_i)\}_{i\in I}$ satisfies (\textdagger) from (\ref{lem:00}), $\RR_{vN}(H)\subseteq H\cap {H^*}^{00}_{\emptyset}\subseteq f(P_i)$, for a suitable $H^*$. Thus $f^{-1}(\RR_{vN}(H))\subseteq P_i\cdot\ker(f)$. This proves the case $n=0$. 

Suppose that the claim is true for $n$. By $(*)$ from the proof of (\ref{lem:iwa}) we get \[\left\{[x,y] : x,y\in f^{-1}\left(\RR_{vN}(H)^{(n)}\right)\right\}\subseteq P_i^4\cdot \ker(f)^{(n+1)}.\] Since $\ker(f)^{(n+1)}$ is a normal subgroup of $G$ and $i$ is an arbitrary element of $I$, (\textdagger) from (\ref{lem:00}) implies that $f^{-1}(\RR_{vN}(H))^{(n+1)}\subseteq P_i\cdot \ker(f)^{(n+1)}$.
\end{proof}
By the claim $f^{-1}(\RR_{vN}(H))^{(m)}\subseteq P_i$ for each $i\in I$, so $f^{-1}(\RR_{vN}(H))^{(m)}\subseteq \RR_{vN}(G)$.
\end{proof}

\begin{corollary} \label{cor:slcover}
The topological universal cover $\widetilde{\SL_2(\R)}$ of $\SL_2(\R)$ is type-absolutely connected, but not absolutely connected.
\end{corollary}
\begin{proof}
Consider the following exact sequence \[0\to \Z \to \widetilde{\SL_2(\R)} \to \SL_2(\R)\to 0.\] Since $\widetilde{\SL_2(\R)}$ is perfect \cite{wood} and $\SL_2(\R)$ is 12-absolutely connected (\ref{thm:che}), (\ref{thm:typec}) implies that $\widetilde{\SL_2(\R)}$ is type-absolutely connected. $\widetilde{\SL_2(\R)}$ is not absolutely connected by \cite{conv_pillay} or by \ref{cor:quasi}$(1)$. Namely, $\widetilde{\SL_2(\R)}$ is isomorphic with $(\Z\times \SL_2(\R),\cdot)$, where $\cdot$ is defined by the rule $(b_1,x_1)\cdot(b_2,x_2) = (b_1+b_2+h(x_1,x_2),x_1x_2)$, for some 2-cocycle $h\colon\SL_2(\R)\times\SL_2(\R)\to \{-1,0,1\}$ \cite[Theorem 2]{asai}. Hence there exists an unbounded quasimorphism $f\colon \widetilde{\SL_2(\R)} \to \Z$ of defect 1.
\end{proof}

\section{Split semisimple linear groups} \label{sec:ex}

The aim of the present section is to prove (see Theorem \ref{thm:che2}) that groups of rational points of split semisimple simply connected linear groups (so in particular Chevalley groups) over arbitrary infinite fields are 12-absolutely connected.

For the class of connected Lie groups $\CC_{Lie}$, in particular for linear groups over $\R$, we can characterize the subclass $\CC_{Lie, tc}$ of $\CC_{Lie}$ consisting of type-absolutely connected groups (cf. (\ref{rem:shtern})). One may expect that some groups from $\CC_{Lie, tc}$ are absolutely connected. However it is not true that all groups from $\CC_{Lie, tc}$ are absolutely connected, as for example $\widetilde{\SL_2(\R)}$ is not absolutely connected \cite{conv_pillay}.

In the case of quasi-simple Chevalley groups over arbitrary infinite fields we have the following result. By \cite[Theorem M]{gord}, there exists a constant $d$, such that every quasi-simple Chevalley group $G(k)$ over an arbitrary field $k$ is $d\cdot\rank(G)$-boundedly simple (see the comment after (\ref{def:abscon})). Hence $G(k)$ is in $\W_{d\cdot\rank(G)}$ (\ref{def:wsim}), so by (\ref{thm:conn}) $G(k)$ is $4d\rank(G)$-ac. In fact, we prove that $G(k)$ is in $\W_3$ and so is $12$-absolutely connected.

By a \emph{linear algebraic group} we mean an affine algebraic group (cf. \cite{borel,plat-rap}). We assume throughout that $k$ is an arbitrary infinite field. All linear groups are assumed to be connected.

We say that a linear algebraic group $G$ is a \emph{$k$-group} or that $G$ is \emph{defined over $k$}, if the ideal of polynomials vanishing on $G$ is generated by polynomials over $k$; cf. \cite[AG \textsection 11]{borel}. By $G(k)$ we denote the group of $k$-rational points of a $k$-group $G$. A \emph{Borel subgroup} of $G$ is any maximal connected Zariski closed solvable subgroup of $G$; a subgroup of $G$ is called \emph{parabolic} if it contains some Borel subgroup. A $k$-group $G$ is called
\begin{itemize}
\item \emph{reductive} (resp. \emph{semisimple}), if the unipotent radical $\RR_u(G)$ (resp. the solvable radical $\RR(G)$) of $G$ is trivial,
\item \emph{split over $k$} or \emph{$k$-split} if some maximal torus $T$ in $G$ is split over $k$ \cite[18.6]{borel}, 
\item \emph{absolutely almost simple} (resp. \emph{almost simple over $k$} or \emph{almost $k$-simple}) if $G$ has no proper nontrivial normal connected closed subgroup (resp. defined over $k$) \cite[0.7]{borel-tits}, \cite[1.1.1, 3.1.2]{tits_class},
\item \emph{simply connected} if $G$ does not admit any nontrivial central isogeny $\pi\colon\widetilde{G} \to G$ (\cite[2.1.13]{plat-rap}, \cite[8.1.11]{springer}, \cite[1.5.4, 2.6.2]{tits_class}).
\end{itemize}

Every absolutely almost simple $k$-group is almost $k$-simple, but the converse is not true in general. However, when $G$ is $k$-split, semisimple and simply connected, then these two notions are equivalent. We use this fact in the proof of Theorem \ref{thm:che}.

For every separable field extension $k'\supseteq k$ there is a functor $\Res_{k'/k}$ \cite[6.17]{borel-tits} which assigns, by restriction of scalars, to each affine $k'$-group $H'$ an affine $k$-group $H = \Res_{k'/k}\left(H'\right)$ such that $H'(k') \cong H(k)$.

\begin{lemma} \label{lem:sim}
Every almost $k$-simple $k$-split simply connected semisimple $k$-group $G$ is absolutely almost simple.
\end{lemma}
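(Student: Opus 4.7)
\bigskip

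\noindent\textbf{Proof proposal.} The plan is to invoke the standard structure theorem for simply connected semisimple groups, which says that over an arbitrary field $k$ every such group decomposes as a direct product of Weil restrictions of absolutely almost simple simply connected groups: if $G$ is simply connected semisimple defined over $k$, then
\[
G \;\cong\; \prod_{i=1}^{r} \Res_{k_i/k}(H_i),
\]
where each $k_i/k$ is a finite separable extension and each $H_i$ is absolutely almost simple simply connected over $k_i$ (see, e.g., Tits \cite{tits_class} or \cite[6.21(ii)]{borel-tits}). The factors $\Res_{k_i/k}(H_i)$ are precisely the minimal nontrivial normal connected closed $k$-subgroups of $G$, so the almost $k$-simplicity assumption reduces this to a single factor: $G \cong \Res_{k'/k}(H)$ with $H$ absolutely almost simple simply connected over some finite separable extension $k'/k$.

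Now I would use the $k$-split hypothesis to force $k' = k$. The Weil restriction functor $\Res_{k'/k}$ sends maximal $k'$-tori of $H$ to maximal $k$-tori of $\Res_{k'/k}(H)$, and every maximal $k$-torus of $G$ arises this way. Thus there is a maximal $k'$-torus $T'$ of $H$ such that $T := \Res_{k'/k}(T')$ is $k$-split. On character modules this gives
\[
X^{*}(T) \;=\; \operatorname{Ind}_{\Gamma_{k'}}^{\Gamma_k} X^{*}(T'),
\]
where $\Gamma_k = \operatorname{Gal}(k^{s}/k)$. For $T$ to be $k$-split the $\Gamma_k$-action on $X^{*}(T)$ must be trivial; since $T'$ has positive rank (as $H$ is nontrivial semisimple), the induced module is $\Gamma_k$-trivial only when the index $[\Gamma_k : \Gamma_{k'}] = [k' : k]$ equals $1$. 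Therefore $k' = k$ and $G \cong H$ is absolutely almost simple.

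The only real obstacle is citing the decomposition theorem cleanly; everything else is a short Galois/character computation. A concrete alternative is to observe directly that $\Res_{k'/k}(\mathbb{G}_m)$ is the archetypal non-split $k$-torus (its character module is the regular representation of $\Gamma_k/\Gamma_{k'}$), so if $T'$ is $k'$-split of rank $\ell$ then $\Res_{k'/k}(T') \cong \Res_{k'/k}(\mathbb{G}_m)^{\ell}$ is $k$-split iff $k'=k$; this avoids the induction formula and keeps the argument self-contained modulo the Tits decomposition.
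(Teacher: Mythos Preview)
Your proposal is correct and follows essentially the same route as the paper: invoke the Tits decomposition $G \cong \Res_{k'/k}(H)$ with $H$ absolutely almost simple simply connected (the paper cites \cite[6.21(II)]{borel-tits} and \cite[3.1.2]{tits_class}), then use the $k$-split hypothesis to force $k'=k$. The only difference is in how the last step is packaged: the paper does a pure dimension count, taking a maximal $k'$-split torus $T'\subset H$, invoking \cite[16.2.7]{springer} to get that the $k$-split rank of $G$ equals $\dim T'$, and comparing with $\dim \Res_{k'/k}(T')=[k':k]\dim T'$ from \cite[6.17]{borel-tits}; you instead pass through the character lattice and the induced-module description $X^{*}(\Res_{k'/k}T')=\operatorname{Ind}_{\Gamma_{k'}}^{\Gamma_k}X^{*}(T')$. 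These are two phrasings of the same fact, so no substantive divergence.
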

\begin{proof}
By \cite[6.21 (II)]{borel-tits}, \cite[3.1.2]{tits_class}, there exists a finite separable field extension $k'$ of $k$ and absolutely almost simple and simply connected $k'$-group $G'$ such that $G = \Res_{k'/k}(G')$. It is enough to prove that $k'=k$. Let $T'$ be a maximal $k'$-split torus of $G'$. Since $G$ is $k$-split, by \cite[16.2.7]{springer}, $T = \Res_{k'/k}(T')$ is a maximal $k$-split torus of $G$ and $\dim(T)=\dim(T')$. However, by \cite[6.17]{borel-tits}, $\dim(T)=\dim(T')[k':k]$, so $k'=k$.
\end{proof}

Throughout this section unless otherwise is stated we use the following notation (\cite[18.6, \textsection 20, \textsection 21]{borel}):

\begin{itemize}
\item $G$ is a connected reductive $k$-split $k$-group, 
\item $T$ is a maximal $k$-split torus in $G$,
\item $\Phi = \Phi(T,G)$ is the root system of \emph{relative $k$-roots} of $G$ with respect to $T$ (each $\alpha\in\Phi$ is a homomorphism $\alpha \colon T \to k^{\times}$); in fact $\Phi$ can be regarded as a \emph{root system} in $\R^n$, satisfying the \emph{crystallographic condition}: $<\alpha,\beta> = 2\frac{(\alpha,\beta)}{(\beta,\beta)} \in \Z$, for all $\alpha, \beta \in \Phi$, where $(,)$ is the usual scalar product on $\R^n$ \cite[14.6, 14.7]{borel},
\item $\Pi \subset \Phi$ is the simple root system generating $\Phi$; that is, every root $\alpha\in\Phi$ can be written as a linear combination of roots from $\Pi$, where all non zero coefficients are positive integers or all are negative integers,
\item $U_{\alpha}$ for $\alpha\in \Phi$, is the $k$-root group of $G$ corresponding to $\alpha\in \Phi$,
\item $\Phi^+$ and $\Phi^-$ are the sets of all positive and negative $k$-roots from $\Phi$,
\item $U$ (resp. $U^-$) is the group generated by all $U_{\alpha}$ for $\alpha\in\Phi^+$ (resp. $\alpha\in\Phi^-$).
\end{itemize}

If $\alpha = \sum_{\beta\in\Pi} k_{\beta}\beta$ is the representation of $\alpha\in\Phi$ with respect to $\Pi$, then the \emph{height} of $\alpha$ is $\htt(\alpha) = \sum_{\beta\in\Pi}k_{\beta}$.

For each $\alpha\in \Phi$ the group $U_{\alpha}(k)$ is a connected and unipotent subgroup, normalised by $T(k)$. In particular \cite[18.6]{borel}, there is an isomorphism $x_{\alpha}\colon k \to U_{\alpha}(k)$ such that for $s\in k$ and $t\in T(k)$, \[tx_{\alpha}(s)t^{-1} = x_{\alpha}(\alpha(t)s). \tag{\textasteriskcentered} \label{eqno1}\] 
More precisely, for $u\in k^{\times}$, $\alpha\in \Phi$, define $w_{\alpha}(u) = x_{\alpha}(u) x_{-\alpha}\left(-u^{-1}\right)x_{\alpha}(u)$ and $t_{\alpha}(u) = w_{\alpha}(u) w_{\alpha}(1)^{-1}$. Furthermore, if $G$ is simply connected, then \cite[Lemma 20(c), p. 29]{stein} $T(k) = \left\langle t_{\alpha}(u) : u\in k^{\times},\ \alpha\in \Pi \right\rangle$ and the action of $t_{\alpha}(u)$ on the $k$-root subgroup $U_{\beta}$ is given by the formula \[t_{\alpha}(u)x_{\beta}(s)t_{\alpha}(u)^{-1} = x_{\beta}\left( u^{<\beta,\alpha>}s\right). \tag{\textasteriskcentered \textasteriskcentered} \label{eqno2}\]

If we fix an ordering on $\Phi^{+}$, then for every element $x\in U$ there is a unique tuple $(s_{\alpha})_{\alpha\in\Phi^{+}}\in k^{\left|\Phi^{+}\right|}$, such that $x$ can be uniquely written in the form \[x = \prod_{\alpha\in\Phi^{+}}x_{\alpha}(s_{\alpha}), \tag{\textasteriskcentered \textasteriskcentered \textasteriskcentered} \label{eqno3} \] where the product is taken in a fixed order. The analogous fact is true for negative roots $\Phi^{-}$ and $U^-$.

The next definition is well known for linear algebraic groups \cite[12.2]{borel}.

\begin{definition} \label{def:reg}
An element $t\in T(k)$ is called \emph{regular} if $C_{G(k)}(t) \cap U(k) = \{e\}$, where $C_{G(k)}(t)$ is the centralizer of $t$ in $G(k)$. Equivalently (by (\ref{eqno2}) and (\ref{eqno3})) $t$ is regular if and only if for every root $\beta\in\Phi^{+}$, $\beta(t)\neq 1$.
\end{definition}

Proposition \ref{prop:reg} below is a variant of \cite[Proposition]{ellgord}. In the proof we use the Gauss decomposition in Chevalley groups from \cite{cheg}.

\begin{theorem}{\cite[Gauss decomposition Theorem 2.1]{cheg}}
Suppose $k$ is an arbitrary infinite field and $G$ is an absolutely almost simple and simply connected $k$-split $k$-group. Then for every noncentral $g\in G(k)$ and $t\in T(k)$ there exist $v\in U^{-}(k), u\in U(k)$ and $x\in G(k)$ such that $g^x = v\cdot t\cdot u$.
\end{theorem}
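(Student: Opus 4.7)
The statement is a \emph{rigidified} Gauss decomposition: every noncentral conjugacy class in $G(k)$ must meet the slice $U^-(k)\cdot t\cdot U(k)$ of the big Bruhat cell $\Omega = U^- T U$ for every prescribed $t \in T(k)$. The plan is to reduce to the case where $g = s$ is semisimple and lies in $T(k)$, then exploit the freedom to vary the conjugating element $x$ together with the explicit action of the root subgroups on $T$ to match the prescribed torus component.

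First, apply the rational Jordan decomposition $g = g_s g_u$, available over $k$ because $G$ is simply connected, semisimple and $k$-split. Noncentrality of $g$ implies that either $g_s$ is noncentral, or $g_s \in Z(G(k))$ and $g_u \neq e$. In the first sub-case, conjugate so that $g_s \in T(k)$ (all maximal $k$-split tori of a $k$-split group are $G(k)$-conjugate), and then absorb the commuting $g_u$ into the $U$-factor of the target Gauss form using $(\textasteriskcentered)$, which shows that conjugation by a torus element scales an element of $U_\alpha(k)$ by $\alpha(\cdot)$. The second sub-case is handled by a parallel argument applied to the unipotent $g_u$ after translating by $g_s^{-1}$.

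Next assume $g = s \in T(k)$ is noncentral. By Lemma~\ref{lem:sim}, $G$ is absolutely almost simple, hence $\alpha(s) \neq 1$ for some $\alpha \in \Sigma$. Consider the rational map $\Phi\colon G \times U^- \times U \dashrightarrow T$ sending $(x,v,u)$ to the $T$-component of the Gauss decomposition of $v^{-1}(x^{-1} s x) u^{-1}$, defined on the dense open locus where this element lies in $\Omega$. The image $\mathrm{Im}(\Phi) \subseteq T$ is constructible and stable under multiplication by each $t_\beta(u)$ for $\beta \in \Pi$, $u \in k^\times$, since these elements arise from conjugations by the rank-one subgroups $\langle U_\beta, U_{-\beta}\rangle$ via $(\textasteriskcentered\,\textasteriskcentered)$. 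Because $G$ is simply connected and $k$-split, the $t_\beta(u)$ generate all of $T(k)$ by \cite[Lemma~20(c), p.~29]{stein} (quoted in the excerpt), so $\mathrm{Im}(\Phi)$ is a nonempty $T(k)$-stable subset of $T$ that is Zariski dense, hence equals all of $T(k)$.

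The main obstacle is to ensure that the polynomial system ``$\Phi(x,v,u) = t$'' admits a solution over $k$ rather than merely over $\bar{k}$. This is where the infinitude of $k$ and the $k$-splitness of $T$ are essential: the relevant fibres of $\Phi$ are $k$-varieties and, because $T$ is split and the root-group coordinates are defined over $k$, these fibres contain Zariski-dense sets of $k$-points once they are nonempty over $\bar{k}$. An explicit verification, performed one rank-one $\SL_2$-factor at a time and mirroring the direct $\SL_2$ computation (in which one solves a single scalar equation $abc = \mathrm{tr}(g) - a - a^{-1}$ over $k$), then produces the required $x, v, u \in G(k)$ with $g^x = v t u$.
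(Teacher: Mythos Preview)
The paper does not prove this theorem at all; it is quoted verbatim from \cite{cheg} and used as a black box in the proof of Proposition~\ref{prop:reg}. So there is no ``paper's own proof'' to compare against here, and the question is simply whether your sketch stands on its own.

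It does not. The reduction in your first paragraph breaks down at two points, and each is already fatal for an \emph{arbitrary} infinite field.

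\smallskip
\textbf{(i) Rational Jordan decomposition.} You invoke $g=g_sg_u$ with $g_s,g_u\in G(k)$. This is available when $k$ is perfect, but the theorem is stated for an arbitrary infinite $k$; over an imperfect field the semisimple and unipotent parts of $g$ need not be $k$-rational. Simply connected $k$-split does not repair this.

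\smallskip
\textbf{(ii) Conjugating $g_s$ into $T(k)$.} Even granting (i), the step ``conjugate so that $g_s\in T(k)$'' is false in general. What is true is that maximal $k$-split tori are $G(k)$-conjugate; what you need is that the particular semisimple element $g_s$ lies in \emph{some} $k$-split maximal torus, and this fails already for $\SL_2$ over $\Q$: the element $\begin{pmatrix}0&-1\\1&0\end{pmatrix}$ is semisimple and noncentral but is not $\SL_2(\Q)$-conjugate to any diagonal matrix, since its eigenvalues are not in $\Q$. Thus your base case ``$g=s\in T(k)$ noncentral'' is never reached for a large class of inputs.

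\smallskip
The subsequent argument with the rational map $\Phi$ is also only heuristic: the assertion that $\mathrm{Im}(\Phi)$ is stable under multiplication by every $t_\beta(u)$ requires showing that for each $\beta\in\Pi$ and each point in the image one can realise that translation by an explicit conjugation-and-absorption move inside $U^-\,T\,U$, which is exactly the nontrivial computation the theorem encodes; and the final claim that nonempty $\bar k$-fibres automatically have $k$-points is not justified (the fibres are not obviously $k$-rational varieties).

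The actual proof in \cite{cheg} avoids Jordan decomposition entirely. It works by induction on the rank, using the Chevalley commutator relations and an explicit $\SL_2$-calculation at a chosen simple root to adjust the $T$-component one coordinate at a time; no step assumes $g$ (or any part of it) can be moved into $T(k)$.
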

 
\begin{proposition}\label{prop:reg}
Suppose that $G$ is an absolutely almost simple and simply connected $k$-split $k$-group. Then $G(k) = \left(t^{G(k)}\right)^3$ for every regular element $t\in T(k)$.
\end{proposition}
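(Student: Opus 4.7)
The plan is to use the just-quoted Gauss decomposition with prescribed semisimple part to reduce to elements of the form $vtu$, and then to produce an explicit three-term decomposition using the Chevalley formula (\ref{eqno1}) together with the regularity of $t$.

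First I would reduce to the noncentral case. For noncentral $g \in G(k)$, Gauss decomposition produces $x \in G(k)$, $v \in U^-(k)$ and $u \in U(k)$ with $g^x = vtu$, and since $\bigl(t^{G(k)}\bigr)^3$ is closed under conjugation it suffices to prove $vtu \in \bigl(t^{G(k)}\bigr)^3$. Central elements $z \in Z(G(k))$ can be handled separately: since $t$ is regular and hence noncentral, $t^{-1}z$ is noncentral, and one recovers $z$ from the noncentral case (possibly after sharpening to a product of two conjugates for such elements).

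The key computation is the containment $t U(k) \subseteq t^{G(k)}$ (and symmetrically $t U^-(k) \subseteq t^{G(k)}$). Formula (\ref{eqno1}) yields
\[t^{x_\alpha(r)} \;=\; x_\alpha(-r)\,t\,x_\alpha(r) \;=\; t \cdot x_\alpha\!\bigl(r(1-\alpha(t)^{-1})\bigr),\]
and regularity of $t$ gives $\alpha(t) \neq 1$ for all $\alpha \in \Sigma$, so $r \mapsto t \cdot x_\alpha(r(1-\alpha(t)^{-1}))$ sweeps out $t \cdot U_\alpha(k)$. Iterating along the height filtration of $\Sigma^+$ via the Chevalley commutator relations extends this to all of $t U(k)$, and an identical argument on the negative side yields the claim for $U^-(k)$.

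Finally I would assemble $vtu$ as a product of three conjugates of $t$. Since $vt = t \cdot (t^{-1}vt) \in t U^-(k) \subseteq t^{G(k)}$ and $tu \in t U(k) \subseteq t^{G(k)}$, the naive identity $vtu = (vt)\cdot t^{-1} \cdot (tu)$ succeeds whenever $t^{-1}$ is $G(k)$-conjugate to $t$ (for instance when $-\mathrm{id}$ belongs to the Weyl group, using a representative of the longest element). In general one substitutes the refined decomposition
\[vtu \;=\; (v t_0) \cdot (t_0^{-1} t\, t_0^{-1}) \cdot (t_0 u)\]
for a suitably chosen Weyl translate $t_0 \in T(k)$ of $t$: one arranges $t_0 \in W\cdot t$ so that $t_0 U^\pm(k) \subseteq t^{G(k)}$ (by reapplying the second step with $t_0$ in place of $t$), and chooses $t_0$ so that the middle torus factor $t_0^{-1} t t_0^{-1}$ is itself a Weyl translate of $t$; the presentation of $T(k)$ via the elements $t_\alpha(u)$ from (\ref{eqno2}), valid because $G$ is simply connected and absolutely almost simple, provides the flexibility needed to meet both conditions simultaneously.

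The main obstacle is this last assembly: producing $t_0 \in T(k)$ so that all three factors simultaneously lie in $t^{G(k)}$ amounts to solving an equation in the Weyl orbit of $t$ inside the character lattice of $T$, and is precisely where the simple-connectedness and the rich Chevalley generators of $T(k)$ become indispensable.
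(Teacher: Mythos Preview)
Your second step is correct and is exactly what the paper does: writing $t^{u}=t\cdot[t,u]$ and showing, via the height filtration and the Chevalley commutator formula, that $u\mapsto[t,u]$ is a bijection $U(k)\to U(k)$ when $t$ is regular, so that $tU(k)\subseteq t^{U(k)}\subseteq C$ (and symmetrically $U^{-}(k)t\subseteq C$). The gap is in your assembly step. You constrain $t_0$ to lie in the Weyl orbit $W\cdot t$ and then require $t_0^{-1}tt_0^{-1}\in W\cdot t$, i.e.\ $t_0^{-2}t\in W\cdot t$ with $t_0\in W\cdot t$. This is an equation in a \emph{finite} set and it need not have a solution once $-\mathrm{id}\notin W$. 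Already for $G=\SL_3$ with $t=\diag(a,b,c)$ generic (say $a=2$, $b=3$, $c=1/6$ over $\mathbb{Q}$), no permutation $t_0$ of the diagonal entries makes $t_0^{-2}t$ again a permutation of $(a,b,c)$; the appeal to the presentation of $T(k)$ by the $t_\alpha(u)$ does not help, since those generators move you inside $T(k)$ but not inside the Weyl orbit. Your treatment of the center has the same defect: from $t^{-1}z\in C^3$ you only get $z\in C^4$, and the parenthetical ``sharpening to two conjugates'' is precisely the missing content.

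The paper sidesteps all of this with one move: apply the Gauss decomposition with prescribed semisimple part $t^{2}$ rather than $t$. Then any noncentral $g$ is conjugate to some $v\,t^{2}\,u\in U^{-}(k)\,t^{2}\,U(k)$, and since $\varphi$, $\psi$ are surjective one may write $v=\psi(v')$, $u=\varphi(u')$ and compute
\[
\psi(v')\,t^{2}\,\varphi(u')=\bigl(v'^{-1}tv'\bigr)\bigl(u'^{-1}tu'\bigr)\in C^{2}.
\]
Thus $G(k)\setminus Z(G(k))\subseteq C^{2}$ directly, with no Weyl-orbit constraint to solve. The center is then handled by cardinality: if $x\notin C^{3}$ then $xC^{-1}\subseteq Z(G(k))$, impossible since $Z(G(k))$ is finite while $C$ is infinite. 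If you want to rescue your outline, replace the three-term splitting of $vtu$ by the observation $U^{-}(k)\,t^{2}\,U(k)=(U^{-}(k)t)(tU(k))\subseteq C\cdot C$ together with Gauss decomposition at $t^{2}$.
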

\begin{proof}
Let $C = t^{G(k)}$. Consider the following functions 
\begin{itemize}
\item $\varphi \colon U(k) \to U(k), \ \varphi(u) = [t,u]=t^{-1}u^{-1}tu$,
\item $\psi \colon U^{-}(k) \to U^{-}(k), \ \psi(v) = [v,t^{-1}] = v^{-1}tvt^{-1}$.
\end{itemize}
The functions $\varphi$ and $\psi$ are well defined, because $U(k)$ and $U^{-}(k)$ are normal subgroups of $B(k)$. Since $t$ is regular, $\varphi$ and $\psi$ are injective. In fact, $\varphi$ and $\psi$ are bijections. We prove that $\varphi$ is surjective; the argument for $\psi$ is similar. Consider the central series $U(k)=U_1(k) \rhd U_2(k) \ldots \rhd U_m(k) = \{e\}$ for $U(k)$, where $U_i(k) = \langle x_{\alpha}(s) : \alpha\in\Phi^{+}, \htt(\alpha)\geq i, s\in k\rangle$. By the Chevalley commutator formula \cite[14.5, Remark(2)]{borel}, we have \[U_i(k)/U_{i+1}(k)\subseteq Z\left(U(k)/U_{i+1}(k)\right). \tag{\dag}\] Each factor $U_i(k)/U_{i+1}(k)$ is a finite dimensional vector space over $k$. Namely, by (\ref{eqno3}), \[U_i(k)/U_{i+1}(k) \cong \left\{\sum_{\htt(\alpha) = i}x_{\alpha}(s_{\alpha}) : s_{\alpha} \in k\right\}.\] By (\ref{eqno1}), $\varphi\left(U_i(k)\right)\subseteq U_i(k)$; hence $\varphi$ induces a $k$-linear transformation \[\varphi_{i} \colon U_i(k)/U_{i+1}(k) \to U_i(k)/U_{i+1}(k),\] with the matrix $\diag(1-\alpha(t): \alpha\in\Phi, \htt(\alpha) = i)$. Since $t$ is regular, each $\varphi_{i}$ is a bijection. Now, using (\dag) one can easily prove, by induction on $1\leq i \leq m-1$, that for every $u\in U(k)$, there is $u_i$ such that $u \equiv [t,u_i] \mod U_{i+1}$. Thus $\varphi$ is surjective.

\begin{claim}
$C^2 \supseteq G(k)\setminus Z(G(k))$
\end{claim}
\begin{proof}[Proof of the claim]
Take an arbitrary $g\in G(k)\setminus Z(G(k))$. By \cite[Theorem 2.1]{cheg} \[g^G \cap U^{-}(k)\cdot t^2\cdot U(k) \neq \emptyset.\] Since $\varphi$ and $\psi$ are surjective, for some $g'\in g^G$ there are $v\in U^{-}(k)$, $u\in U(k)$ satisfying $g' = \psi(v)t^2\varphi(u) = v^{-1}tvu^{-1}tu \in C^2$.
\end{proof}

Our conclusion follows from the claim, because if $x\not\in C^2\cdot C$, then $xC^{-1}\subseteq G(k)\setminus C^2 \subseteq Z(G(k))$; this is impossible since $Z(G(k))$ is finite and $C$ is infinite. 
\end{proof}

We prove that simply connected split groups are 12-absolutely connected.

\begin{theorem} \label{thm:che}
Let $k$ be an arbitrary infinite field and $G$ be a $k$-split, semisimple, simply connected,  $k$-group. Then $G(k)$ is in $\W_3$, so $G(k)$ is 12-absolutely connected.
\end{theorem}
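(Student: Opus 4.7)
\medskip

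\noindent\textbf{Proof plan.} The plan is to first reduce to the absolutely almost simple case and then feed regular tori elements into Proposition \ref{prop:reg}. Since $G$ is semisimple, simply connected and $k$-split, it decomposes as a direct product $G = G_1\times\cdots\times G_m$ of almost $k$-simple, simply connected, $k$-split $k$-groups (the central isogeny from the product of almost $k$-simple factors is trivial for simply connected groups). By Lemma \ref{lem:sim} each factor $G_i$ is absolutely almost simple, and by Lemma \ref{lem:zam}$(1)$ it suffices to establish $3$-weak simplicity of each $G_i(k)$. Once $3$-weak simplicity is proved, Theorem \ref{thm:conn}$(2)$ immediately gives the $12$-absolute connectedness statement. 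So from now on I may assume that $G$ itself is absolutely almost simple.

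\medskip

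\noindent The core step is to exhibit arbitrarily long witnesses that $G(k)\setminus\Gi_3(G(k))$ fails to be thick. Proposition \ref{prop:reg} already tells us that every regular element $t\in T(k)$ lies in $\Gi_3(G(k))$, because $G(k)=\bigl(t^{G(k)}\bigr)^3 \subseteq \bigl(t^{G(k)}\cup (t^{-1})^{G(k)}\bigr)^{\leq 3}$. Recall that by Definition \ref{def:reg} an element $t\in T(k)$ is regular iff $\beta(t)\neq 1$ for every $\beta\in\Sigma^{+}$. Hence I only need, for every $n\in\N$, a sequence $t_1,\ldots,t_n\in T(k)$ with the property that
\[
\beta(t_i)\neq \beta(t_j) \quad\text{for all }\beta\in\Sigma^{+}\text{ and all }1\leq i<j\leq n,
\]
for then each $t_i^{-1}t_j$ is regular and thus belongs to $\Gi_3(G(k))$.

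\medskip

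\noindent Such sequences are built greedily. Since $T$ is $k$-split of some rank $r$, $T(k)\cong (k^{\times})^{r}$, and every root $\beta\in\Sigma$ is a nontrivial character $\beta\colon T(k)\to k^{\times}$. For each $\beta\in\Sigma^{+}$ the kernel $\ker(\beta)$ is a proper (closed) subgroup of $T(k)$, and because $k$ is infinite and $T$ is split, $T(k)$ cannot be covered by finitely many translates of such proper subgroups (each translate $\ker(\beta)\cdot s$ has ``codimension $\geq 1$'' inside the split torus and excludes an infinite subset). Thus, having chosen $t_1,\ldots,t_{\ell}$, the set of bad $t_{\ell+1}$, namely $\bigcup_{i\leq \ell}\bigcup_{\beta\in\Sigma^{+}} \ker(\beta)\cdot t_i$, is a finite union of proper translates and does not exhaust $T(k)$, so an admissible $t_{\ell+1}$ exists. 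This produces sequences of arbitrary length, proving that $G(k)\setminus\Gi_3(G(k))$ is not thick, i.e.\ that $G(k)$ is $3$-weakly simple.

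\medskip

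\noindent The main technical obstacle is really the reduction step: verifying that a $k$-split simply connected semisimple group genuinely splits as a \emph{direct} (not merely isogeneous) product of absolutely almost simple, simply connected, $k$-split factors, so that Lemma \ref{lem:sim} and Lemma \ref{lem:zam}$(1)$ can be applied. Once this is in place, the torus argument above is straightforward: the split condition on $T$ together with infiniteness of $k$ gives enough room in $T(k)$ to avoid the finite collection of ``hyperplane'' conditions defining non-regularity, and Proposition \ref{prop:reg} finishes the job.
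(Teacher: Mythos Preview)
Your proof follows the same overall architecture as the paper's: reduce to the absolutely almost simple case via the direct-product decomposition and Lemma~\ref{lem:sim}, invoke Proposition~\ref{prop:reg} to place every regular torus element in $\Gi_3(G(k))$, and then produce arbitrarily long sequences in $T(k)$ whose pairwise quotients are regular. The only genuine divergence is in how that last sequence is built.

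The paper constructs the sequence explicitly: it picks integers $(\lambda_\alpha)_{\alpha\in\Pi}$ so that $\sum_{\alpha}\lambda_\alpha\langle\beta,\alpha\rangle\neq 0$ for every root $\beta$ (using nondegeneracy of the Cartan matrix), sets $a(s)=\prod_{\alpha}t_\alpha(s^{\lambda_\alpha})$, and takes $t_i=a(s^i)$ for a suitable $s\in k^\times$. Your argument is instead an existential avoidance: at each stage you must dodge the finite union $\bigcup_{i\leq\ell}\bigcup_{\beta\in\Sigma^+}\ker(\beta)\cdot t_i$. This is correct, but your justification---``codimension $\geq 1$'' and ``excludes an infinite subset''---is not a proof; excluding an infinite set from each coset does not by itself prevent a finite union of such cosets from covering $T(k)$. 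What you actually need is that each $\ker(\beta)$ has \emph{infinite index} in $T(k)$ (the image of $\beta$ contains $(k^\times)^n$ for some $n\neq 0$, which is infinite since $x\mapsto x^n$ has finite kernel on an infinite $k^\times$), and then B.~H.~Neumann's lemma on coset coverings finishes the step. Once you say this, your argument is complete and arguably more conceptual than the paper's explicit Cartan-matrix computation; the paper's version, on the other hand, is self-contained and gives a concrete one-parameter family rather than an inductive existence statement.
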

\begin{proof}
Without loss of generality we may assume that $G$ is absolutely almost simple. Indeed, by \cite[22.10]{borel}, \cite[3.1.2]{tits_class}, the group $G$ is a direct product $G_1\times\ldots\times G_n$ over $k$, where each $G_i$ is $k$-split, semisimple, simply connected and almost $k$-simple. Thus $G(k)\cong\prod_{1\leq i \leq n}G_i(k)$. By Lemma (\ref{lem:sim}), every $G_i$ is absolutely almost simple, so by Lemma \ref{lem:zam}$(1)$ and Theorem \ref{thm:conn}, it is enough to prove that $G_i(k)$ is in $\W_3$.

By Proposition \ref{prop:reg} and Theorem \ref{thm:conn}, in order to prove the conclusion of the theorem, one needs to show that the set of non regular elements in $T(k)$ is non thick (cf. (\ref{def:thick})). By Definition \ref{def:reg}, it is enough to find, for each $m\in\N$, a sequence $(t_i)_{i<m}$ in $T(k)$ such that $\beta(t_i^{-1}t_j)\neq 1$ for $i<j<m$ and $\beta\in\Phi$.

By (\ref{eqno1}) and (\ref{eqno2}), $\beta\left(t_{\alpha}(s)\right) = s^{<\beta,\alpha>}$. 
Fix some sequence $\left(\lambda_{\alpha}\right)_{\alpha\in\Pi} \subset \Z$ and for $s\in k^{\times}$, define $a(s) = \prod_{\alpha\in\Pi}t_{\alpha}\left(s^{\lambda_{\alpha}}\right)$. Then for $i<j<m$, \[\beta\left(a\left(s^i\right)^{-1}a\left(s^j\right)\right) = s^{(j-i)\sum_{\alpha\in\Pi}\lambda_{\alpha}<\beta,\alpha>}.\] 
Fix $m\in\N$. Since the Cartan matrix $\left(<\alpha,\beta>\right)_{\alpha,\beta\in\Pi}$ of the irreducible root system $\Phi$ is non degenerate (cf. \cite[Section 3.6]{carter}), one can find a sequence $\left(\lambda_{\alpha}\right)_{\alpha\in\Pi}$ of integers such that, for every simple root $\beta\in\Pi$, the sum $\sum_{\alpha\in\Pi}\lambda_{\alpha}<\beta,\alpha>$ is positive. Every root $\beta$ is a $\Z$-linear combination of simple roots with all coefficients positive, or all negative. Moreover $<\cdot,\cdot>$ is additive in the first coordinate. 
Hence for all $\beta\in\Phi$, \[\sum_{\alpha\in\Pi}\lambda_{\alpha}<\beta,\alpha>\neq 0.\] The field $k$ is infinite, so there is $t\in k^{\times}$ such that $\beta\left(a\left(s^i\right)^{-1}a\left(s^j\right)\right)\neq 1$ for every $i<j<m$. Thus $(t_i)_{i<m} = \left(a\left(s^i\right)\right)_{i<m}$ satisfies our requirements.
\end{proof}

\begin{corollary}
Let $\SL_{\infty}(k)$ be the union (the direct limit) of groups $\SL_n(k)$, where for $n<m$, $\SL_n(k)$ is embedded in $\SL_m(k)$ in a natural way, that is, $A \mapsto \begin{pmatrix} A & 0 \\ 0 & I \end{pmatrix}$. Proposition \ref{prop:ext}$(3)$ and Theorem \ref{thm:che} implies that $\SL_{\infty}(k)$ is 12-absolutely connected.
\end{corollary}

Now we consider the case when $G$ is $k$-split but not necessarily simply connected. Our goal is Theorem \ref{thm:che2} below, which is a more general version of Theorem \ref{thm:che}.

Every semisimple $k$-group $G$ has a \emph{universal $k$-covering} $\pi \colon \widetilde{G} \to G$ defined over $k$ (\cite[Proposition 2.10]{plat-rap} \cite[2.6.1]{tits_class}); that is, $\widetilde{G}$ is a simply connected $k$-group and $\pi$ is a central isogeny defined over $k$.

Denote by $G(k)^+$ the canonical normal subgroup of $G(k)$ generated by the $k$-rational points of the unipotent radicals of parabolic subgroups of $G$ defined over $k$ (\cite{tits_class}) \[G(k)^+ = \left\langle \RR_u(P)(k) : P \text{ is a parabolic subgroup defined over }k \right\rangle.\] 

Let $G$ be a semisimple $k$-split $k$-group and $\pi \colon \widetilde{G} \to G$ be a universal $k$-covering of $G$. Then \cite[6.5, 6.6]{borel-tits-hom}
\begin{enumerate}
\item $\widetilde{G}(k)^+ = \widetilde{G}(k)$ (see for example \cite[Lemma 64, p. 183]{stein}),
\item $\pi\left(\widetilde{G}(k)\right) = G(k)^+$ and $G(k)^+$ is the derived subgroup $[G(k),G(k)]$ of $G(k)$ (cf. \cite[6.5]{borel-tits-hom}).
\end{enumerate}

\begin{theorem} \label{thm:che2}
Let $k$ be an arbitrary infinite field and $G$ be a $k$-split, semisimple $k$-group. Then the derived subgroup $[G(k),G(k)]$ is in $\W_3$, so it is 12-absolutely connected. Moreover $[G(k),G(k)]=\RR_{tc}(G(k))$ (cf. Section \ref{sec:rad}) and it is  $\emptyset$-definable subgroup of $G(k)$ in the pure group language.
\end{theorem}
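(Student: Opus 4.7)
The plan is to reduce the general semisimple split case to the simply connected case already handled in Theorem \ref{thm:che}, using the universal $k$-covering, and then to extract definability from the weak simplicity bound via Proposition \ref{prop:cw}.

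First, I would take a universal $k$-covering $\pi\colon \widetilde{G}\to G$ (which exists by Proposition 2.10 of Platonov--Rapinchuk). Since $G$ is semisimple and $k$-split, so is $\widetilde{G}$, and by construction $\widetilde{G}$ is simply connected. Theorem \ref{thm:che} therefore applies to $\widetilde{G}(k)$, yielding that $\widetilde{G}(k)$ is $3$-weakly simple and $12$-absolutely connected.

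Second, I would invoke Lemma \ref{lem:cover}(2), which states that $\pi(\widetilde{G}(k))=G(k)^{+}=[G(k),G(k)]$. Thus the restriction of $\pi$ to $\widetilde{G}(k)$ is an epimorphism $\widetilde{G}(k)\twoheadrightarrow [G(k),G(k)]$. Homomorphic images inherit absolute connectedness by Proposition \ref{prop:ext}(1) and weak simplicity by Lemma \ref{lem:zam}(1), so $[G(k),G(k)]$ is $3$-weakly simple and $12$-absolutely connected, establishing the first assertion.

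Finally, for the definability clause I would use the fact that a $3$-weakly simple group has commutator width bounded by the universal constant $k_3$ from Proposition \ref{prop:cw}. Consequently $[G(k),G(k)]$ coincides with the set of products of at most $k_3$ commutators in $G(k)$, which is cut out by the first-order formula
\[
\varphi(x)\;\equiv\;\exists y_{1},z_{1},\ldots,y_{k_{3}},z_{k_{3}}\;\;x=[y_{1},z_{1}]\cdot[y_{2},z_{2}]\cdots[y_{k_{3}},z_{k_{3}}]
\]
in the pure group language, without parameters. This gives the required $\emptyset$-definability. The most delicate step is the application of Lemma \ref{lem:cover}, which rests on the Kneser--Tits conjecture for split groups; but this is precisely the content of Steinberg's Lemma 64 cited there, so there is nothing further to prove. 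No other step presents a real obstacle, since everything else is an immediate application of previously established transfer principles.
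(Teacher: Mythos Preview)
Your reduction to the simply connected case via the universal $k$-covering, together with Lemma \ref{lem:cover} and the transfer principles (Proposition \ref{prop:ext}(1), Lemma \ref{lem:zam}(1)), is exactly the argument the paper gives for the first two claims.

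For the definability clause you take a different route from the paper. The paper picks some $g\in\Gi_3\big([G(k),G(k)]\big)$, observes that by normality of $[G(k),G(k)]$ in $G(k)$ one has $[G(k),G(k)]=\big(g^{G(k)}\cup g^{-1\,G(k)}\big)^{\leq 3}$, hence definability over the single parameter $g$; it then invokes $\emptyset$-invariance of the derived subgroup to upgrade to $\emptyset$-definability. Your argument instead feeds the $3$-weak simplicity of $[G(k),G(k)]$ into Proposition \ref{prop:cw} to get a uniform commutator-width bound $k_3$, and then notes that products of at most $k_3$ commutators of $G(k)$ form exactly $[G(k),G(k)]$ (one inclusion is trivial; for the other, elements of $[G(k),G(k)]$ are products of $\leq k_3$ commutators of elements of $[G(k),G(k)]\subseteq G(k)$). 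This is correct and has the advantage of producing an explicit parameter-free formula in one step, bypassing the ``definable with parameters $+$ $\emptyset$-invariant $\Rightarrow$ $\emptyset$-definable'' manoeuvre. The paper's route, on the other hand, avoids the detour through Proposition \ref{prop:cw} and gives a concrete defining formula of lower complexity (three conjugates of a fixed element rather than $k_3$ commutators).
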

\begin{proof}
Let $\pi \colon \widetilde{G} \to G$ be a universal $k$-covering of $G$. The group $\widetilde{G}$ is simply connected, so by Theorem \ref{thm:che}, $\widetilde{G}(k)$ is in $\W_3$. By Lemma \ref{lem:zam}$(1)$ $\pi\left(\widetilde{G}(k)\right)=G(k)^+=[G(k),G(k)]$ is 12-absolutely connected. Since $G(k)^+\in\W_3$, one can find $g\in \G_3(G(k)^+)$ (cf. (\ref{def:wsim})). Therefore, since $G(k)^+$ is a normal subgroup of $G(k)$, $G(k)^+$ is definable over $g$. However, $[G(k),G(k)]$ is $\emptyset$-invariant, so is definable over $\emptyset$. As $[G(k),G(k)]$ is absolutely connected, $[G(k),G(k)]\subseteq \RR_{tc}(G(k))$, and $\RR_{tc}(G(k))$ is perfect, so $\RR_{tc}(G(k)) \subseteq [\RR_{tc}(G(k)),\RR_{tc}(G(k))] \subseteq [G(k),G(k)]$.
\end{proof}

Theorem \ref{thm:che2} and (\ref{rem:quotient}) imply the following remark.

\begin{remark} \label{rem:linquot}
\begin{enumerate}
\item  Suppose $G$ is a semisimple $k$-split $k$-group and $G(k)$ is equipped with some first order structure. Let $G(k)^{\rm ab}$ be $G(k)/[G(k),G(k)]$ with the structure inherited from $G(k)$ and $G^*$ be any sufficiently saturated extension of $G(k)$. Then by (\ref{rem:quotient}), $G^*/{G^*}^{\infty}_{\emptyset} \cong {G(k)^{\rm ab}}^*/{{G(k)^{\rm ab}}^*}^{\infty}_{\emptyset}$ and $G^*/{G^*}^{00}_{\emptyset} \cong {G(k)^{\rm ab}}^*/{{G(k)^{\rm ab}}^*}^{00}_{\emptyset}$. In particular ${G^*}/{G^*}^{\infty}_{\emptyset}$ is abelian. As a corollary of (\ref{rem:quotient}) we have that for semisimple split groups, model-theoretic connected components can be described in terms of components of the abelianization.
\item Suppose $G$ is a reductive $k$-split $k$-group. Then $[G,G]$ is a semisimple $k$-split $k$-group \cite[2.3, 14.2]{borel}. Hence, by (\ref{thm:che2}) the group \[G_0 = \left[\left[G,G\right]\left(k\right), \left[G,G\right]\left(k\right)\right]\] is a 12-ac subgroup of $G(k)$ and $G_0 = \RR_{tc}(G(k))$ (\ref{def:tcrad}). Furthermore, if $[G,G](k)$ is $\emptyset$-definable in $G(k)$, then $G_0$ is $\emptyset$-definable in $G(k)$ (by (\ref{thm:che2})). In this case $G^*/{G^*}^{\infty}_{\emptyset} \cong {A}^*/{A^*}^{\infty}_{\emptyset}$, where $A = G(k)/G_0$.
\end{enumerate}
\end{remark}

\subsection{Groups over algebraically closed field}

We consider the case when $k=K$ is an algebraically closed field. In this subsection we assume that $G$ is a connected linear algebraic $K$-group.

Any absolutely connected group is perfect (\ref{prop:perf}). In our context this is also a sufficient condition for absolute connectedness.

\begin{proposition} \label{prop:alg} The following conditions are equivalent:
\begin{itemize}
\item[(a)] $G(K)$ is absolutely connected,
\item[(b)] $G(K)$ is perfect.
\end{itemize}
More precisely, if $\cw(G(K))=r$ and the solvable radical $\RR(G(K))$ is of derived length $m$, then $G(K)$ is $12(4r)^m$-absolutely connected.
\end{proposition}

If $G(K)$ is perfect, the commutator width of $G(K)$ is at most $2\dim(G)$ \cite[2.2]{borel}.

\begin{proof}
$(a)\Rightarrow(b)$ follows from Proposition \ref{prop:perf}. For $(b)\Rightarrow(a)$, note that $H = G/\RR(G)$ is a semisimple linear algebraic $K$-group. As $G(K)$ is perfect, $H(K)$ is perfect and $K$-split, because every semisimle linear algebraic group is split. Hence by (\ref{thm:che2}), $H(K)$ is in $\W_3$. It is enough to apply (\ref{lem:zam}$(2b)$) and (\ref{thm:conn}) to $G(K) \to H(K)$ and $H(K)$.
\end{proof}

Below is an immediate consequence of (\ref{prop:alg}) and results from Section \ref{sec:rad}.

\begin{corollary}
The perfect core of $G(K)$ is absolutely connected and equals $\RR_{tc}(G(K))$.
\end{corollary}

Proposition \ref{prop:alg} cannot be generalized to a nonsplit groups in general. For example $\SO_3(\R)$ is simple as an abstract group (in particular perfect), but is not absolutely connected. 


We do not know if the bound $12(4r)^m$ from (\ref{prop:alg}) is asymptotically sharp.

\begin{question}
Is there a constant $c$ such that every perfect linear algebraic group is $c$-absolutely connected?
\end{question}

\section{More examples} \label{sec:other}

In this section we provide more examples of absolutely connected groups. We consider some infinite permutation groups and infinite-dimensional general linear groups.

\subsection{Infinite permutation groups} \label{sec:permut}
Let $\Omega$ be an infinite set. Consider the alternating group of permutations of $\Omega$: \[\alt(\Omega) = \{\sigma \in \sym(\Omega) : \supp(\sigma) \text{ is finite and }\sigma\text{ is even} \}.\]

The converse of Theorem \ref{thm:conn} is not true.

\begin{proposition} \label{prop:prop}
$\alt(\Omega)$ is 8-absolutely connected but is not in $\W$.
\end{proposition}
\begin{proof}
$\alt(\Omega)$ is not in $\W$ because $\Gi_N(\alt(\Omega)) = \emptyset$, for every $N\in\N$. To prove the absolute connectedness, take an arbitrary $n$-thick subset $P\subseteq \alt(\Omega)$. By considering $P^4$ and using (\ref{cor:4}), we may assume that $P$ is normal. Now it is enough to prove that $P^2 = \alt(\Omega)$.

We claim that $P$ contains any finite product of disjoint even cycles. Take $n$ cycles $(x,a_{1},\ldots,a_{m})$, $(x,b_{1},\ldots,b_{m})$, $\ldots$, $(x,z_{1},\ldots,z_{m})$, where $x$ is the only common element of these cycles. Since $P$ is $n$-thick, we may assume that \[(x,a_1,\ldots,a_m)^{-1} \circ (x,b_1,\ldots,b_m) = (x,b_1,\ldots,b_m,a_m,\ldots,a_1) \in P.\] Therefore $P$ contains every even cycle (because $P$ is normal in $\alt(\Omega)$). In the same way we can prove that $P$ contains any finite product of disjoint even cycles. Namely, instead of one cycle $(x,a_{1},\ldots,a_{m})$, it is enough to consider a product $(x_0,a_{01},\ldots,a_{0n_0})\circ(x_1,a_{11},\ldots,a_{1n_1})\circ\ldots\circ (x_k,a_{k1},\ldots,a_{kn_k})$ of disjoint cycles.

The following two cycles $(x,y,a_1,\ldots,a_{2p+1})$ and $(x,y,b_1,\ldots,b_{2q+1})$ are in $P$, so \[(x,y,a_1,\ldots,a_{2p+1})\circ (x,y,b_1,\ldots,b_{2q+1}) = (x,a_1,\ldots,a_{2p+1})\circ(y,b_1,\ldots,b_{2q+1})\] is in $P^2$. Thus $P^2$ contains every product of two disjoint odd cycles. In a similar way, we can prove that every even permutation (which is a disjoint product of even cycles and an even number of odd cycles) is in $P^2$.
\end{proof}

Now we concentrate on the group $\sym(\Omega)$ and its normal subgroups. For a cardinal $\kappa$ define $\sym^{\kappa}(\Omega) = \{\sigma \in \sym(\Omega) : |\supp(\sigma)|<\kappa\}$. By the previous proposition, $\sym^{\aleph_0}(\Omega)$ has an absolutely connected subgroup $\alt(\Omega)$ of index 2.

We use the following result of Bertram, Moran, Droste and G\"obel (\cite{bertram,moran,droste}).
\begin{thm}[{\cite[Theorem p. 282]{droste}}]
If $\sigma, \tau \in \sym(\Omega)$, $|\supp(\tau)| \leq |\supp(\sigma)|$ and $|\supp(\sigma)|$ is infinite, then $\tau \in \left(\sigma^{\sym(\Omega)}\right)^4$.
\end{thm}

\begin{proposition} \label{ex:sym} Let $\kappa$ be an uncountable cardinal. 
\begin{itemize}
\item[(1)] If $\kappa = \lambda^+$ is a successor cardinal, then $\sym^{\kappa}(\Omega)$ is in $\W_4$, so it is 16-absolutely connected.
\item[(2)] If $\kappa$ is a limit cardinal, then $\sym^{\kappa}(\Omega)$ is 16-absolutely connected, but is not in $\W$.
\end{itemize}
\end{proposition}
\begin{proof} The group $\sym^{\kappa}(\Omega)$ is a normal subgroup of $\sym(\Omega)$ and two elements from $\sym^{\kappa}(\Omega)$ are conjugate in $\sym(\Omega)$ if and only if they are conjugate in $\sym^{\kappa}(\Omega)$. Therefore in the case when $\kappa = \lambda^+$ is a successor cardinal, from the above theorem we conclude that the group $\sym^{\kappa}(\Omega)$ is in $\W_4$ (and non-simple as an abstract group), that is, $\Gi_4(\sym^{\kappa}(\Omega)) = \sym^{\kappa}(\Omega) \setminus \sym^{\lambda}(\Omega)$, and $\left[\sym^{\kappa}(\Omega) : \sym^{\lambda}(\Omega)\right]$ is infinite. When $\kappa$ is a limit cardinal, $\sym^{\kappa}(\Omega) = \bigcup_{\lambda < \kappa} \sym^{\lambda}(\Omega)$. In this case $\sym^{\kappa}(\Omega)$ is not in $\W$, as $\Gi_N(\sym^{\kappa}(\Omega)) = \emptyset$. However for a successor cardinal $\lambda<\kappa$, the group $\sym^{\lambda}(\Omega)$ is in $\W_4$. By Theorem \ref{thm:conn}, $\sym^{\lambda}(\Omega)$ is 16-absolutely connected and by Proposition \ref{prop:ext}$(3)$ the same is true for $\sym^{\kappa}(\Omega)$.
\end{proof}

\subsection{Infinite-dimensional general linear group} \label{ex:inf} The following proposition is derived from the result of Tolstykh (\cite{tolst}).

\begin{proposition} \label{prop:tolst}
Assume that $V$ is an infinite-dimensional vector space over a division ring $D$. Then the group $\GL(V)$ of all linear automorphisms of $V$ is in $\W_{32}$, so $\GL(V)$ is 128-absolutely connected.
\end{proposition}
\begin{proof}
Call a subspace $U$ of $V$ \emph{moietous} if $\dim U = \dim V = \codim U$. We say that $\pi \in G$ is a \emph{moietous involution} if there exists a decomposition $V = U \oplus U' \oplus W$ into a direct sum of moietous subspaces, such that $\pi$ on $W$ is the identity and exchanges $U$ and $U'$. Proposition 1.1 from \cite{tolst} says that every moietous involution is in $\Gi_{32}(\GL(V))$. In order to prove that $\GL(V)$ is in $\W_{32}$ simple it is enough to find in $\GL(V)$ an infinite sequence $(g_i)_{i\in\N}$, such that for all $i<j\in\N$ the element $g_i^{-1}g_j$ is a moietous involution (so $g_i^{-1}g_j \in \Gi_{32}(\GL(V))$). Take an infinite decomposition $V =  \bigoplus_{i\in\N} V_i$ of $V$ into moietous subspaces. Let $g_i$ be a moietous involution of $V$ with respect to the following decomposition: \[V = V_{2i} \oplus V_{2i+1} \oplus \bigoplus_{2i,2i+1 \neq j\in\N} V_j,\] such that $g_i(V_{2i}) = V_{2i+1}$. Then clearly $g_i^{-1}g_j$ for $i\neq j$ is also a moietous involution.
\end{proof}

\begin{acknowledgements}
The author would like to express his gratitude to Ludomir Newelski for his guidance and would like to thank also Cong Chen, Alexander Borovik, Antongiulio Fornasiero, Guntram Heinke, Arieh Lev, Anand Pillay, Andrei Rapinchuk, Anastasia Stavrova, Katrin Tent and John Truss for many useful conversations and Dugald Macpherson for suggestions for improving the presentation of the paper.
\end{acknowledgements}

\end{document}